\let\cal\mathcal
\def\Ascr{{\cal A}}
\def\Escr{{\cal E}}
\def\Fscr{{\cal F}}
\def\Hscr{{\cal H}}
\def\Kscr{{\cal K}}
\def\Nscr{{\cal N}}
\def\Oscr{{\cal O}}
\def\Qscr{{\cal Q}}
\def\Rscr{{\cal R}}
\def\Tscr{{\cal T}}
\def\Uscr{{\cal U}}
\let\blb\mathbb
\def\GG{{\blb G}}
\def \PP{{\blb P}}
\def \ZZ{{\blb Z}}
\def \HH{{\blb H}}
\def\id{\text{id}}
\def\Gr{\operatorname{Gr}}
\def\QGr{\operatorname{QGr}}
\DeclareMathOperator\qgr{qgr}
\def\gr{\operatorname{gr}}
\def\gr{\operatorname {gr}}
\def\Ext{\operatorname {Ext}}
\def\End{\operatorname {End}}
\def\RHom{\operatorname {RHom}}
\def\coker{\operatorname {coker}}
\def\ker{\operatorname {ker}}
\def\Ker{\operatorname {ker}}
\def\End{\operatorname {End}}
\def\id{{\operatorname {id}}}
\DeclareMathOperator{\PProj}{\mathbf{Proj}}
\DeclareMathOperator{\eval}{ev}
\declaretheorem[name=Theorem,numberwithin=section]{theorem}
\declaretheorem[sibling=theorem]{question}
\declaretheorem[name=Corollary,sibling=theorem]{corollary}
\declaretheorem[name=Definition,sibling=theorem]{definition}
\declaretheorem[name=Example,sibling=theorem]{example}
\declaretheorem[name=Lemma,sibling=theorem]{lemma}
\declaretheorem[name=Proposition,sibling=theorem]{proposition}
\declaretheorem[name=Remark,sibling=theorem]{remark}
\DeclareMathOperator\Hom{Hom}
\DeclareMathOperator\identity{id}
\def\opp{\operatorname{op}}
\def\Hilb{\operatorname{Hilb}}
\DeclareMathOperator\coh{coh}
\def\Bl{\operatorname{Bl}}
\def\Sym{\operatorname{Sym}}
\def\Sch{\operatorname{Sch}}
\def\Sets{\operatorname{Sets}}
\def\HHH{\operatorname{H}}
\newcommand\bounded{\ensuremath{\mathrm{b}}}
\newcommand\LLL{\ensuremath{\mathbf{L}}}
\newcommand\quadric{\ensuremath{\mathbb{P}^1\times\mathbb{P}^1}}
\newcommand\LL{\ensuremath{\mathrm{L}}}
\DeclareMathOperator\derived{\mathbf{D}}
\DeclareMathOperator\Grass{Gr}
\DeclareMathOperator\HHHH{HH}
\mathchardef\mhyphen="2D
\newcommand\dash{\nobreakdash-\hspace{0pt}}
\begin{document}

\title[Noncommutative quadrics and Hilbert squares]{Derived categories of noncommutative quadrics and Hilbert squares}
\author{Pieter Belmans}
\email[Pieter Belmans]{pbelmans@mpim-bonn.mpg.de}
\address{Max Planck Institute for Mathematics, Vivatsgasse 7, 53111 Bonn, Germany}
\author{Theo Raedschelders}
\email[Theo Raedschelders]{Theo.Raedschelders@glasgow.ac.uk}
\address{University of Glasgow, University Place, Glasgow, G12 8SQ, United Kingdom}

\begin{abstract}
A non-commutative deformation of a quadric surface is usually described by a three-dimensional cubic Artin--Schelter regular algebra. In this paper we show that for such an algebra its bounded derived category embeds into the bounded derived category of a commutative deformation of the Hilbert scheme of two points on the quadric. This is the second example in support of a conjecture by Orlov. Based on this example we formulate an infinitesimal version of the conjecture, and provide some evidence in the case of smooth projective surfaces.
\end{abstract}

\maketitle

%\tableofcontents

%\listoftodos

\section{Introduction}
\label{section:introduction}
In this paper we study the derived category of a quadric (and its noncommutative analogues) in relationship with the derived category of the Hilbert scheme of two points on a quadric (and commutative deformations thereof). The motivation comes from several seemingly disparate observations.

First of all let~$S$ be a smooth projective surface (over an algebraically closed field~$k$ throughout). Then it is a classical result of Fogarty that the Hilbert scheme of~$n$ points~$\Hilb^nS$ is again a smooth projective variety, of dimension~$2n$ \cite{MR0237496}. If we moreover assume that~$\HHH^1(S,\mathcal{O}_S)=\HHH^2(S,\mathcal{O}_S)=0$ (e.g.~$S$ is a rational surface, such as a quadric) then \cite{MR3397451} shows that the Fourier--Mukai functor
\begin{equation}
  \label{equation:krug-sosna-embedding}
  \Phi_{\mathcal{I}_{\mathcal{U}_n}}\colon\derived^\bounded(S)\to\derived^\bounded(\Hilb^n S)
\end{equation}
is a fully faithful functor for~$n\geq 2$, where~$\mathcal{I}_{\mathcal{U}_n}$ is the ideal sheaf for the universal family~$\mathcal{U}_n\subset S\times\Hilb^nS$.

Another piece of motivation stems from the notion of geometric dg~categories as introduced in \cite{orlov-smooth-and-proper}. He shows that any dg~category whose homotopy category has a full exceptional collection can be embedded in (an enhancement of) the derived category of a smooth projective variety. This construction can be applied to the full exceptional collection describing the derived category of a quadric surface, but the resulting variety is constructed using iterated projective bundles and does not seem to have a geometric interpretation in terms of a moduli problem, unlike the embedding \eqref{equation:krug-sosna-embedding}.

It is interesting to try and apply Orlov's algorithm to a dg~category with a full exceptional collection which is not of geometric origin. This brings us to the final piece of motivation: deformations of abelian categories. In noncommutative algebraic geometry a central role is played by abelian categories and their derived categories, and there is a framework for describing the deformations of abelian categories \cite{MR2183254,MR2238922}, so in particular it can be applied to~$\coh\quadric$. The quadric is easily seen to be rigid (i.e.~$\HHH^1(\quadric,\Tscr_{\quadric})=0$), but its category of coherent sheaves has nontrivial deformations ($\HHHH^2(\coh\quadric)\cong\HHH^0(\quadric,\bigwedge^2\Tscr_{\quadric})$ is~9\dash dimensional), which can be seen as deformations of~$\quadric$ in the noncommutative direction.

Because the quadric has a strongly ample sequence it is moreover possible to pass from infinitesimal deformations to formal deformations \cite{MR3050709}, and the theory has been worked out in detail in \cite{MR3108697,MR2836401}. A noncommutative quadric is an abelian category~$\qgr A$, which is a certain quotient category of the category of graded modules for a (generalized) graded algebra~$A$ satisfying some natural conditions. On the derived level it is possible to view a family of noncommutative quadrics by varying the relations in the quiver coming from the full and strong exceptional collection \cite{okawa-ueda-nc-quadrics}. For these new exceptional collections it makes sense to apply Orlov's embedding result, but again the result is an iterated projective bundle construction where arbitrary choices have been made and there is no moduli interpretation.

Yet for~$\mathbb{P}^2$ (and its noncommutative deformations) Orlov shows that there exists an embedding in (a commutative deformation of)~$\Hilb^2\mathbb{P}^2$ \cite{zbMATH06527075}, hence there \emph{is} a moduli interpretation for the derived category of the finite-dimensional algebras whose structure resembles that of the Be{\u\i}linson quiver for~$\mathbb{P}^2$.

In this paper we obtain a result analogous to Orlov's for noncommutative quadrics. The following is a compressed version of \cref{theorem:noncommutative-embedding} and is our main result.
\begin{theorem}
  \label{theorem:main-theorem-compressed}
  For a generic noncommutative quadric~$A$ there exists a deformation~$H$ of~$\Hilb^2\quadric$ and a fully faithful embedding
  \begin{equation}
    \derived^\bounded(\qgr A)\hookrightarrow\derived^\bounded(\coh H).
  \end{equation}
\end{theorem}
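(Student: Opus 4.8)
The plan is to realise the embedding on the level of exceptional collections and then transport it through a deformation argument. First I would fix a full strong exceptional collection on $\quadric$ — the four line bundles $\mathcal{O}, \mathcal{O}(1,0), \mathcal{O}(0,1), \mathcal{O}(1,1)$ — and describe $\derived^\bounded(\qgr A)$ for a generic noncommutative quadric $A$ as the derived category of modules over a finite-dimensional algebra $\Lambda_A$ obtained by deforming the relations in the corresponding quiver, following \cite{okawa-ueda-nc-quadrics}. In parallel I would work out a semiorthogonal decomposition of $\derived^\bounded(\coh\Hilb^2\quadric)$: since $\Hilb^2\quadric$ can be built from $\quadric\times\quadric$ by blowing up the diagonal and taking the $\mathbb{Z}/2$\dash quotient, one obtains an explicit (tilting or exceptional) description, and crucially a subcollection whose $\Ext$\dash algebra is exactly $\Lambda_0 := \Lambda_A$ for $A$ the commutative quadric. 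Concretely, the Krug--Sosna functor $\Phi_{\mathcal{I}_{\mathcal{U}_2}}$ of \eqref{equation:krug-sosna-embedding} already provides a fully faithful $\derived^\bounded(\quadric)\hookrightarrow\derived^\bounded(\Hilb^2\quadric)$; I would compute the image of the exceptional collection under this functor and check that its right (or left) orthogonal, together with the image, assembles into a workable description of the ambient category.

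Second I would set up the deformation theory on both sides. On the noncommutative side, $\HHHH^2(\coh\quadric)$ is $10$\dash dimensional and the generic noncommutative quadric corresponds to a generic point in this space (equivalently, a generic cubic Artin--Schelter regular algebra); on the Hilbert scheme side one has the Hochschild--Kostant--Rosenberg decomposition of $\HHHH^2(\Hilb^2\quadric)$, with a piece $\HHH^0(\Hilb^2\quadric, \bigwedge^2\Tscr)$ parametrising commutative deformations $H$. The key compatibility is a map $\HHHH^2(\coh\quadric)\to\HHHH^2(\coh\Hilb^2\quadric)$ induced by the fully faithful functor (Fourier--Mukai kernels transport Hochschild cohomology classes along the embedding, by functoriality of Hochschild cohomology for admissible subcategories). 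I would show that a generic noncommutative deformation class on $\quadric$ maps to a class on $\Hilb^2\quadric$ that is represented by a commutative deformation $H$ — i.e. it lands in the $\HHH^0(\bigwedge^2\Tscr)$ summand, or at least that its image can be matched by a genuine deformation of the scheme after possibly adjusting by automorphisms. This is where the "generic" hypothesis does real work: one needs the relevant linear-algebra map between these cohomology spaces to have the right rank on a dense open locus.

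Third, having matched the first-order data, I would globalise: deform the tilting/exceptional description of $\derived^\bounded(\coh\Hilb^2\quadric)$ over the base of the deformation, obtaining a family of algebras $\Lambda_t$ over (a neighbourhood in) the deformation space, such that $\Lambda_t\text{-}\mathrm{mod}$ is an admissible subcategory of $\derived^\bounded(\coh H_t)$ for the corresponding commutative deformation $H_t$, and such that the central fibre recovers the Krug--Sosna embedding. The deformed subcategory $\mathsf{D}^\bounded(\Lambda_t)$ then stays admissible (admissibility is an open/closed condition along flat families of kernels, or can be checked by the usual $\Ext$\dash vanishing being preserved under small deformation since $\Lambda_t$ is still finite-dimensional of finite global dimension). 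Matching $\Lambda_t$ with the deformed noncommutative-quadric algebra $\Lambda_A$ — via the first-order identification of the previous step, promoted to an isomorphism of families using that the obstruction spaces ($\HHHH^3$) behave well, or simply using that the deformation functor of $\Lambda_0$ is smooth (hereditary-like behaviour / computing $\HHHH^3(\Lambda_0)$) — then gives $\derived^\bounded(\qgr A)\simeq\mathsf{D}^\bounded(\Lambda_A)\simeq\mathsf{D}^\bounded(\Lambda_t)\hookrightarrow\derived^\bounded(\coh H)$, as desired.

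The main obstacle I expect is the second step: showing that the generic noncommutative deformation of $\quadric$, transported via the Krug--Sosna embedding, is realised by an honest \emph{commutative} deformation of $\Hilb^2\quadric$ rather than a merely noncommutative (or gerby) one. This requires a precise understanding of the induced map on $\HHHH^2$ and its interaction with the HKR summands on the Hilbert scheme — in particular one must rule out a nonzero component in the $\HHH^1(\Tscr)$ (which is zero here, so fine) and more seriously control the $\HHH^2(\mathcal{O})$ "gerbe" direction and the $\HHH^0(\bigwedge^2\Tscr)$ "noncommutative" direction on $\Hilb^2\quadric$ itself; a dimension count together with explicit computation of the Krug--Sosna kernel's action should pin this down, but it is the delicate heart of the argument. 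A secondary technical point is ensuring the exceptional/tilting description of $\derived^\bounded(\coh\Hilb^2\quadric)$ and its chosen admissible piece are compatible with the Krug--Sosna functor, which may force a careful choice of mutation.
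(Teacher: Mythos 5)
There is a genuine gap, and it sits exactly where you yourself locate the ``delicate heart'' of your argument. Your second step requires showing that a generic noncommutative deformation class in $\HHH^0(\quadric,\bigwedge^2\Tscr_{\quadric})$ is matched, through the Hochschild cohomology comparison attached to a fully faithful functor into $\derived^\bounded(\Hilb^2\quadric)$, by a class in the commutative summand $\HHH^1(\Hilb^2\quadric,\Tscr_{\Hilb^2\quadric})$. This is precisely the surjectivity question \eqref{equation:conjecture-morphism} that the paper formulates in \cref{section:remarks} and explicitly states it cannot answer even for $X=\quadric$, $Y=\Hilb^2\quadric$. Two further problems compound this. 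First, limited functoriality of Hochschild cohomology for an admissible subcategory gives a \emph{restriction} map $\HHHH^2(\Hilb^2\quadric)\to\HHHH^2(\coh\quadric)$, so to send a deformation of the quadric to a deformation of the Hilbert scheme you must produce a preimage under this map lying in the $\HHH^1(\Tscr)$ summand; deformations of an admissible subcategory do not in general lift to the ambient category, so this is not formal. Second, even granting a first-order match, passing from an infinitesimal identification to an actual isomorphism of families (your third step) requires controlling obstructions and algebraizing the resulting formal deformations, none of which is supplied. So the proposal reduces the theorem to an open problem plus substantial unproved deformation theory.

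The paper's actual proof avoids Hochschild cohomology entirely and is completely explicit. It uses the model $\Hilb^2\quadric\cong\Bl_L\GG$ with $\GG=\Grass(1,3)$ and $L=L_1\sqcup L_2$ the two rulings (\cref{theorem:description-H}), not the $\mathbb{Z}/2$-quotient of the blown-up product. A generic noncommutative quadric, via its geometric quintuple and a mutation of the linear exceptional collection to a $3$-block one (\cref{proposition:squaremutate}), determines a \emph{geometric square}: two tensor-factorizations of a fixed $4$-dimensional space, hence two new lines in $\GG$ (\cref{proposition:linear-to-block}). The deformation $H$ of the Hilbert scheme is then simply the blow-up of $\GG$ along these two moved lines, and the embedding is realized by the explicit strong exceptional collection $\langle p^*\Rscr, C_1, C_2, \Oscr\rangle$ of \cref{theorem:exceptional-collection}, whose endomorphism algebra is checked by direct computation to be the quiver algebra of the noncommutative quadric. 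In short: the commutative deformation is constructed by hand as a blow-up with moving center, rather than detected cohomologically.
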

Recall that a fully faithful embedding is automatically admissible in this context.

To prove this result we need an explicit geometric model for~$\Hilb^2\quadric$, which we give in \cref{theorem:description-H}. In \cref{subsection:geometric-squares} we explain how this geometric model depends on so called geometric squares: linear algebra data that describes the composition law in the derived category. In \cref{subsection:noncommutative-quadrics} it is shown how a sufficiently generic noncommutative quadric gives rise to such a geometric square, and indeed to an embedding as in \cref{theorem:main-theorem-compressed}.

Note that there exists a notion of Hilbert scheme of points for a general cubic Artin--Schelter regular graded algebra \cite{MR2309895}, which is a subset of all noncommutative quadrics. We do not address the comparison between these moduli spaces and the deformations constructed in this paper.

We also formulate a general question regarding limited functoriality of Hochschild cohomology and the Hochschild--Kostant--Rosenberg decomposition, motivated by a conjecture of Orlov. This is done in \cref{section:remarks}. We discuss some evidence suggesting an interesting relationship between the Hochschild cohomology of a surface and the Hochschild cohomology of the Hilbert scheme of points, showing that the results in this paper hint towards a much more general picture.

\textbf{Acknowledgements}
We would like to thank Michel Van den Bergh for suggesting the exceptional collection in~\cref{theorem:exceptional-collection} and for many other interesting discussions. We would like to thank the referee for interesting suggestions and corrections, in particular for the more natural proof of Lemma \ref{lemma:factorization}. The first author would also like to thank Shinnosuke Okawa for interesting discussions.

The first author is supported by the Max Planck Institute for Mathematics in Bonn, and was at the time of writing supported by a Ph.D.~fellowship of the Research Foundation--Flanders (FWO). The second author is supported by an EPSRC postdoctoral fellowship EP/R005214/1, and was at the time of writing supported by a Ph.D.~fellowship of the Research Foundation--Flanders (FWO).

\section{The geometry of \texorpdfstring{$\Grass(1,3)$}{Gr(1,3)} and \texorpdfstring{$\Hilb^2(\quadric)$}{Hilb(2,S)}}
\label{section:description-G-and-H}

Throughout the paper, we will assume~$k$ is an algebraically closed field of characteristic~$0$.

\subsection{Grassmannians}
\label{subsection:grassmannians}
Let~$V$ be a vector space of dimension~$n$ and~$l$ an integer with~$n\ge l+1$. We let~$\GG$ be the Grassmannian of~$l$\dash dimensional quotients of~$V$, and closed points will be denoted with square brackets, for example~$[V \twoheadrightarrow W]$. Let
\begin{equation}
  0 \to \Rscr \xrightarrow{r} V\otimes_k \Oscr_{\GG} \xrightarrow{q} \Qscr \to 0
\end{equation}
be the tautological exact sequence on~$\GG$, where~$\Qscr$ is the universal quotient bundle of rank~$l$, and~$\Rscr$ is the universal subbundle of rank~$n-l$. Also, put~$\Oscr_{\GG}(1)=\bigwedge\nolimits^l\Qscr$.

The Grassmannian is a fine moduli space for the functor~$F_{\GG}\colon\Sch^{\opp} \to \Sets$ sending a scheme~$X$ to the set of epimorphisms~$V \otimes_k \Oscr_X \twoheadrightarrow \Fscr$, where~$\Fscr$ is a rank~$l$ vector bundle on~$X$. Hence, there is a bijection
\begin{equation}
  \label{equation:bijection-universal-property}
  \Hom_{\Sch}(X,\GG) \to F_{\GG}(X): \Phi \mapsto \Phi^*q
\end{equation}
In particular, if~$\Phi$ is an closed immersion, then~$\Fscr$ is just the restriction of~$\Qscr$ to~$X$. The inverse of \eqref{equation:bijection-universal-property} is constructed as follows: given an epimorphism~$\phi: V\otimes_k \Oscr_X \twoheadrightarrow \Fscr$, we define an element~$\Phi \in \Hom_{\Sch}(X,\GG)$ by
\begin{equation}
  \Phi(x)=[V \xrightarrow{\phi_x} \Fscr_x \otimes_{\Oscr_x} k(x)].
\end{equation}

From now on we will focus on a specific case. Suppose~$\dim_kV=4$ and~$l=2$. Let~$V_0, V_1$ denote vector spaces of dimension~$2$, and suppose~$\chi\colon V \to V_0 \otimes_k V_1$ is a given isomorphism. Consider the epimorphism~$\phi_\chi$ defined by the composition
\begin{equation}
  \label{equation:epimorphism}
  \begin{tikzcd}
    V\otimes_k\Oscr_{\PP(V_0)} \arrow[d, "\chi\otimes_k\identity"] \\
    V_0\otimes_k V_1\otimes_k\Oscr_{\PP(V_0)} \arrow[d, "\cong"] \\
    \HHH^0(\PP(V_0),\Oscr_{\PP(V_0)},V_1\otimes_k\Oscr_{\PP(V_0)}(1))\otimes_k\Oscr_{\PP(V_0)} \arrow[d, "\eval"] \\
    V_1\otimes_k\Oscr_{\PP(V_0)}(1).
  \end{tikzcd}
\end{equation}
Then under \eqref{equation:bijection-universal-property}, $\phi_{\chi}$ induces a closed immersion
\begin{equation}
  \Phi_{\chi}\colon\PP(V_0) \hookrightarrow \GG:p \mapsto [V \xrightarrow{\chi} V_0 \otimes_k V_1 \xrightarrow{p \otimes_k \identity_{V_1}} V_1]
\end{equation}

The following lemma will be used in \cref{subsection:geometric-squares} to construct strong exceptional collections.

\begin{lemma}
  \label{lemma:factorization}
  Any decomposition $\chi:V \to V_0 \otimes_k V_1$ gives rise to an isomorphism
  \begin{equation}
    \begin{aligned}
      F_{\chi}\colon\Hom_{\Oscr_{\GG}}(\Rscr,\Kscr_{\chi}) \otimes_k \Hom_{\Oscr_{\GG}}(\Kscr_{\chi},\Oscr_{\GG}) & \to \Hom_{\Oscr_{\GG}}(\Rscr,\Oscr_{\GG}) \\
      f \otimes_k g & \mapsto g \circ f,
    \end{aligned}
  \end{equation}
  where $\Kscr_{\chi}$ is the coherent sheaf
  \begin{equation}
    \Kscr_{\chi}\coloneqq\Ker\big(\HHH^0(\GG, \Oscr_{\PP(V_0)}(1)) \otimes_k \Oscr_{\GG} \xrightarrow{\eval} \Oscr_{\PP(V_0)}(1)\big),
  \end{equation}
  and $\PP(V_0)$ is embedded into $\GG$ via $\Phi_{\chi}$.
\end{lemma}

The proof of this lemma is quite technical and is relegated to the appendix.

\subsection{Hilbert schemes of points}
\label{subsection:hilbert-scheme-of-points}
The Hilbert scheme is a classical object in algebraic geometry, parametrising closed subschemes of a projective scheme. One can associate a Hilbert polynomial to a closed subscheme, and this gives rise to a disjoint union decomposition of the Hilbert scheme. In particular, for the constant Hilbert polynomial we get the \emph{Hilbert scheme of points}.

For a smooth projective curve~$C$ one has that~$\Hilb^nC=\Sym^nC$. In particular it is again smooth projective and of dimension~$n$. For a smooth projective surface~$S$ it can be shown that~$\Hilb^nS$ is again smooth projective and of dimension~$2n$. For higher-dimensional varieties and~$n\gg 2$ the Hilbert scheme becomes (very) singular.

We will identify~$\quadric$ with its image under the Segre embedding
\begin{equation}
  \label{equation:segre}
  \quadric \hookrightarrow \PP^3:([x_0:x_1],[y_0:y_1]) \mapsto [x_0y_0:x_0y_1:x_1y_0:x_1y_1],
\end{equation}
which we denote by~$Q$, a smooth quadric surface. This surface has two rulings, and every line on~$Q$ defines a point of~$\GG$. We denote
\begin{equation}
  \label{equation:definition-L}
  L\coloneqq L_0 \sqcup L_1=\{l \in \GG\mid l \subset Q\} \subset \GG,
\end{equation}
where~$L_0$ (respectively~$L_1$) corresponds to the lines in the first (respectively second) ruling. Note that~$L_0 \cap L_1=\emptyset$, and each of these two lines determines a factorization of~$V$ as in~\cref{lemma:factorization}.

The following proposition provides our main model for working with the Hilbert scheme~$\HH\coloneqq\Hilb^2(\quadric)$ of two points on~$\PP^1 \times \PP^1$. A reference for this description is \cite[Theorem~1.1]{MR2327036}.
\begin{proposition}
  \label{theorem:description-H}
  There is an isomorphism~$\HH \cong \Bl_L\GG$.
\end{proposition}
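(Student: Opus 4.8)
The plan is to exhibit the isomorphism by comparing universal families and using the universal property of the blow-up. First I would recall the classical geometric picture: a length-two subscheme $Z \subset Q$ spans either a secant line (if $Z$ consists of two distinct points, or is curvilinear in the "generic" direction) or, in the degenerate case, $Z$ is a length-two subscheme supported at a single point whose tangent direction lies along one of the two rulings — equivalently $Z$ is contained in a unique line $l \subset Q$. Since $Q$ is a quadric, a line in $\PP^3$ meets $Q$ in a length-two subscheme unless it is entirely contained in $Q$; this sets up a rational map $\HH \dashrightarrow \GG$ sending $Z$ to the unique line $\langle Z \rangle \subset \PP^3$ containing it (well-defined because two points, or a length-two scheme, span a unique line). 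This map is defined away from the locus of subschemes lying on a line $l \in L$, and over such $l$ the fibre is $\PP(\HHH^0(l, \Oscr_l(1))^\vee) \cong \PP^1$, parametrising length-two subschemes of the line $l \cong \PP^1$ — i.e.\ an effective divisor of degree two on $\PP^1$.

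Next I would make this precise scheme-theoretically rather than just on points. The Hilbert--Chow type construction gives a morphism $\HH \to \GG$: concretely, the universal subscheme $\mathcal{Z} \subset Q \times \HH$ composed with $Q \hookrightarrow \PP^3$ has the property that the relative span is a flat family of lines in $\PP^3$ over the open locus $\HH \setminus E$, where $E$ is the closed locus of non-reduced-along-a-ruling subschemes; one checks this extends to a morphism $f\colon \HH \to \GG$ using that $\HH$ is smooth (hence normal) and $\GG$ is projective, together with valuative criterion along the boundary. Alternatively, and perhaps more cleanly, I would produce $f$ directly by exhibiting on $\HH$ a rank-two quotient of $V \otimes \Oscr_\HH$ and invoking the moduli description~\eqref{equation:bijection-universal-property}: the natural candidate is built from pushing forward $\mathcal{O}_{\mathcal Z}(1) = \mathcal{O}_{\PP^3}(1)|_{\mathcal Z}$ along $\mathcal Z \to \HH$, whose fibre is the $2$-dimensional space of linear forms restricted to $Z$, and the evaluation $V = \HHH^0(\PP^3, \Oscr(1)) \to \HHH^0(Z, \Oscr(1))$ is surjective precisely because $Z$ imposes independent conditions on linear forms when it spans a line. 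One must verify this pushforward is locally free of rank two and the map is surjective on all of $\HH$ — here the key point is that even for the degenerate subschemes lying on a line $l \subset Q$, the span is still a genuine line, so the rank is maintained. This $f$ restricts to an isomorphism over $\GG \setminus L$, since over a line not contained in $Q$ the intersection $l \cap Q$ is the unique length-two subscheme mapping to $l$.

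Then I would identify the exceptional locus. Over $l \in L$ the fibre $f^{-1}(l)$ is $\PP^1 = \Sym^2 l$, while $L \subset \GG$ is a smooth curve (two disjoint $\PP^1$'s) of codimension $3$ in the $4$-dimensional $\GG$. By the universal property of the blow-up, the morphism $f\colon \HH \to \GG$ factors through $\Bl_L \GG \to \GG$ provided the ideal sheaf $f^{-1}(\mathcal{I}_L)\cdot \Oscr_\HH$ is invertible; I would check this by a local computation at a boundary point, showing $E = f^{-1}(L)$ is a Cartier divisor — essentially because blowing up the ruling amounts to recording a tangent direction, which is exactly the degree-two-divisor datum on the line. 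This yields a morphism $g\colon \HH \to \Bl_L \GG$ over $\GG$. Finally, $g$ is an isomorphism: it is birational (both sides agree with $\GG \setminus L$ over the open locus), $\HH$ is normal, and $\Bl_L\GG$ is normal with $g$ proper; a dimension count on the exceptional fibres shows $g$ is quasi-finite on $E$, indeed an isomorphism on fibres ($\PP^1 \to \PP^1$, the projectivised normal bundle of $L$ in $\GG$ matching $\Sym^2$ of the line via the two-dimensional rulings datum), so by Zariski's main theorem $g$ is an isomorphism.

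The main obstacle I expect is the scheme-theoretic behaviour along the degenerate locus: showing that $f$ extends to a genuine \emph{morphism} on all of $\HH$ (not merely a rational map), that the relevant pushforward sheaf stays locally free of rank two there, and that $f^{-1}(\mathcal{I}_L)$ becomes invertible — equivalently, matching the projectivised normal bundle $\PP(\mathcal{N}_{L/\GG})$ with the $\PP^1$ of length-two subschemes on each line. Everything away from $L$ is an elementary span-of-two-points argument; the content is entirely concentrated in a local model near the non-reduced subschemes, and this is precisely where I would lean on the cited reference \cite[theorem~1.1]{MR2327036} to avoid redoing the local computation in full.
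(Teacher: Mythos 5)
Your overall strategy is the same as the paper's: build the morphism $f\colon\HH\to\GG$ sending a length-two subscheme to its linear span, observe it is an isomorphism over $\GG\setminus L$, identify the fibres over $L$, and conclude by the uniqueness/universal property of the blow-up (the paper is in fact much terser and delegates the scheme-theoretic care you describe to the cited reference \cite[theorem~1.1]{MR2327036} and to \cite[\S4.6]{MR1288523}). However, there is a concrete error in your identification of the exceptional fibres. For $l\in L$ the fibre $f^{-1}(l)$ consists of all length-two subschemes of the line $l\cong\PP^1$, i.e.\ effective degree-two divisors on $\PP^1$; this is $\Sym^2\PP^1\cong\PP\bigl(\HHH^0(l,\Oscr_l(2))\bigr)\cong\PP^2$, not $\PP\bigl(\HHH^0(l,\Oscr_l(1))^\vee\bigr)\cong\PP^1$ as you write (and $\PP^1=\Sym^2 l$ is simply false). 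Your own codimension count already contradicts this: you correctly note that $L$ has codimension~$3$ in the four-dimensional $\GG$, so the exceptional divisor of $\Bl_L\GG$ is the projectivisation of a rank-three normal bundle (cf.\ \cref{lemma:NLi-QLi}, where $\Nscr_{L_i}\GG\cong\Oscr_{\PP^1}(2)^{\oplus 3}$) and hence has $\PP^2$ fibres over $L$.

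As written, your concluding step therefore fails: the claimed ``isomorphism on fibres ($\PP^1\to\PP^1$)'' would have to be a map from a $\PP^1$ onto a $\PP^2$ fibre of the exceptional divisor, which cannot be an isomorphism, so the Zariski's-main-theorem argument does not close. The fix is straightforward and restores agreement with the paper: both the fibre of $f$ over $l\in L$ and the fibre of $\PP(\Nscr_{L}\GG)\to L$ are $\PP^2$, and the identification $\Sym^2\PP^1\cong\PP^2$ is exactly what matches the degree-two-divisor datum on $l$ with the projectivised normal directions. With that correction your argument is a legitimate (and more detailed) version of the paper's proof.
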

\begin{proof}
  Using the Segre embedding~\eqref{equation:segre} there exists a surjective morphism
  \begin{equation}
  \label{equation:blowup-morphism}
    f\colon\HH\to\GG\colon [Z]\mapsto l_{[Z]}
  \end{equation}
  where the line~$l_{[Z]}$ for a point~$[Z]\in\HH$ is defined to be the line through the two points if~$[Z]$ corresponds to two distinct points, otherwise we use the tangent vector to define the line.

  On the open set~$\GG\setminus L$ this is a bijection, the inverse being given by the morphism mapping a line in~$\mathbb{P}^3$ to its intersection with the quadric.

  On the closed set~$L\subseteq\GG$ the fiber over~$l\in L$ can be identified with~$\mathbb{P}^2$: it is formed by the set of pairs of points on the line~$l$, hence~$\HH_l\cong\Sym^2\mathbb{P}^1\cong\mathbb{P}^2$.

  By the uniqueness property of blow-ups, see e.g.~\cite[\S4.6]{MR1288523}, we get the proposed isomorphism.

  A more abstract description of the morphism \eqref{equation:blowup-morphism} can be obtained using the moduli interpretation \eqref{equation:bijection-universal-property} of the Grassmannian. Denoting by
  \begin{equation}
    \Phi=\{(x,\xi) \in Q \times \mathbb{H} \mid x \in \xi\} \cong \Bl_{\Delta}(Q \times Q)
  \end{equation}
  the universal family for $\HH$, with projection morphisms $\text{pr}_1:\Phi \to Q$ and $\text{pr}_2:\Phi \to \HH$, we define $\Escr:=\text{pr}_{2*}\text{pr}_1^*\Oscr(1,1)$. This is a vector bundle of rank $2$, and one checks that
  \begin{equation}
    \label{equation:identification-sections}
    \HHH^0(\HH,\Escr) \cong \HHH^0(\Phi,\text{pr}_1^*\Oscr(1,1)) \cong \HHH^0(Q,\Oscr(1,1)).
  \end{equation}
  By pushing forward the evaluation morphism
  \begin{equation}
    \HHH^0(\Phi,\text{pr}_1^*\Oscr(1,1)) \otimes_k \Oscr_{\Phi} \to \text{pr}_1^*\Oscr(1,1)
  \end{equation}
  along $\text{pr}_2$, and using \eqref{equation:identification-sections}, one obtains the evaluation morphism
  \begin{equation}
    \label{equation:evaluation-morphism}
    \HHH^0(Q,\Oscr(1,1)) \otimes_k \Oscr_{\HH} \to \Escr,
  \end{equation}
  which on the fiber over $\xi \in \HH$ is just the obvious restriction
  \begin{equation}
    \HHH^0(\Oscr(1,1)) \to \HHH^0(\Oscr(1,1)|_{\xi}).
  \end{equation}
  Note that these restrictions are all surjective because $\Oscr(1,1)$ is very ample, so also \eqref{equation:evaluation-morphism} is surjective. Under the bijection \eqref{equation:bijection-universal-property}, \eqref{equation:evaluation-morphism} corresponds exactly to \eqref{equation:blowup-morphism}. For more on this construction and its relation to ($n$)-very ampleness of line bundles, see \cite{MR1065935}.
%    This morphism is defined by using the~(1-)very ample line bundle~$\mathcal{O}_{\mathbb{P}^1\times\mathbb{P}^1}(1,1)$, which by \cite{MR1065935} gives the morphism~$f$. In the functorial language used in \cref{subsection:grassmannians} one uses the tautological bundle on~$\mathbb{H}$ given by the Fourier--Mukai transform of the line bundle~$\mathcal{O}_{\mathbb{P}^1\times\mathbb{P}^1}(1,1)$, which is of rank~2 and which comes equipped with a surjective morphism from~$\mathcal{O}_{\mathbb{H}}\otimes_kV$.
\end{proof}

\begin{remark}
  \label{remark:description-H-orlov}
  In \cite{zbMATH06527075} the embedding of a noncommutative~$\mathbb{P}^2$ into the derived category of a deformation of~$\Hilb^2\mathbb{P}^2$ is based on the description of the Hilbert scheme as~$\Hilb^2\mathbb{P}^2\cong\mathbb{P}(\Sym^2\Tscr_{\mathbb{P}^2}(-1)^{\vee})$.
\end{remark}

To find an exceptional collection on~$\HH$ that is compatible with deformations we need to describe some bundles on~$\GG$ and on~$\HH$ more explicitly. Based on \cref{theorem:description-H} we will use the following notation for the rest of the paper.
\begin{equation}
  \label{equation:notation-commutative}
  \begin{tikzcd}[row sep = huge, column sep = large]
    E=E_0\sqcup E_1 \arrow[hook]{r}{j=j_0\sqcup j_1} \arrow[two heads]{d}{q=q_0\sqcup q_1} & \mathbb{H}\coloneqq\Bl_{L_0\sqcup L_1}\GG \arrow[two heads]{d}{p} \\
    L=L_0\sqcup L_1 \arrow[hook]{r}{i=i_0\sqcup i_1} & \GG.
  \end{tikzcd}
\end{equation}

\begin{lemma}
  \label{lemma:NLi-QLi}
  There are isomorphisms
  \begin{equation}
    \begin{aligned}
      \Qscr|_{L_i} &\cong \Oscr_{\PP^1}(1)^{\oplus 2}, \\
      \Rscr|_{L_i} & \cong \Oscr_{\PP^1}(-1)^{\oplus 2}, \\
      \Nscr_{L_i} \GG &\cong \Oscr_{\PP^1}(2)^{\oplus 3}.
    \end{aligned}
  \end{equation}
\end{lemma}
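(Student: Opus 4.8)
The plan is to compute each restriction directly, using the fact that~$L_i \cong \PP^1$ is embedded in~$\GG$ via one of the factorizations~$V \cong V_1 \otimes V_2$ from \cref{lemma:factorization}, together with the universal property \eqref{equation:bijection-universal-property}. Concretely, say~$L_1 = \PP(V_1)$ is the ruling obtained by fixing the second~$\PP^1$-factor; then the embedding~$L_1 \hookrightarrow \GG$ classifies the epimorphism~$\HHH^0(L_1, V_2 \otimes \Oscr_{L_1}(1)) \otimes \Oscr_{L_1} \twoheadrightarrow V_2 \otimes \Oscr_{L_1}(1)$, so by the defining property of the universal quotient~$\Qscr$ we get immediately~$\Qscr|_{L_1} \cong V_2 \otimes \Oscr_{\PP^1}(1) \cong \Oscr_{\PP^1}(1)^{\oplus 2}$. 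Dualizing the tautological sequence \eqref{equation:tautological-sequence} restricted to~$L_1$, and using that~$\HHH^1(\PP^1, \Oscr) = 0$ so the sequence~$0 \to \Rscr|_{L_1} \to V \otimes \Oscr_{\PP^1} \to \Qscr|_{L_1} \to 0$ stays exact, one reads off~$\Rscr|_{L_1}$: it is a rank-$2$ bundle of degree~$-2$, and the composite~$\Rscr|_{L_1} \to V \otimes \Oscr_{\PP^1}$ has the property (again from the~$V_1$-contraction description in \cref{lemma:factorization}) that it is nowhere a sub-line-bundle inclusion of a trivial summand in a way forcing a positive-degree piece; the cleanest argument is that~$\Rscr|_{L_1}^\vee = \Qscr'$ classifies another~$\PP^1$ in the dual Grassmannian, or simply that a rank-$2$ degree-$(-2)$ subbundle of~$\Oscr_{\PP^1}^{\oplus 4}$ with trivial~$\HHH^0$ must be~$\Oscr(-1)^{\oplus 2}$ (if it were~$\Oscr \oplus \Oscr(-2)$ it would have a section). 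So~$\Rscr|_{L_1} \cong \Oscr_{\PP^1}(-1)^{\oplus 2}$, and the same for~$L_2$ by symmetry of the two rulings.

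For the normal bundle~$\Nscr_{L_i}\GG$, I would use the standard identification of the tangent bundle of the Grassmannian,~$\Tscr_\GG \cong \HHom(\Rscr, \Qscr) = \Rscr^\vee \otimes \Qscr$. Restricting to~$L_i$ and plugging in the two computations just obtained gives~$\Tscr_\GG|_{L_i} \cong \Rscr|_{L_i}^\vee \otimes \Qscr|_{L_i} \cong \Oscr_{\PP^1}(1)^{\oplus 2} \otimes \Oscr_{\PP^1}(1)^{\oplus 2} \cong \Oscr_{\PP^1}(2)^{\oplus 4}$. Since~$\Tscr_{L_i} = \Tscr_{\PP^1} \cong \Oscr_{\PP^1}(2)$, the normal bundle sequence~$0 \to \Tscr_{\PP^1} \to \Tscr_\GG|_{L_i} \to \Nscr_{L_i}\GG \to 0$ exhibits~$\Nscr_{L_i}\GG$ as a rank-$3$ quotient of~$\Oscr_{\PP^1}(2)^{\oplus 4}$ of degree~$6$. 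To conclude it is~$\Oscr_{\PP^1}(2)^{\oplus 3}$ I need that the quotient has no summand of degree~$\geq 3$ (which is automatic: a quotient of~$\Oscr(2)^{\oplus 4}$ cannot have a summand of slope~$> 2$) and no summand of degree~$\leq 1$; the latter follows because the inclusion~$\Tscr_{\PP^1} \hookrightarrow \Tscr_\GG|_{L_i}$ is a subbundle inclusion (the embedding~$L_i \hookrightarrow \GG$ being a closed immersion of smooth varieties), so the sequence is split locally and a degree-$1$-or-less summand of the quotient would split off a corresponding subbundle of~$\Oscr(2)^{\oplus 4}$ of that slope, impossible. Hence~$\Nscr_{L_i}\GG \cong \Oscr_{\PP^1}(2)^{\oplus 3}$.

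The main obstacle is pinning down the isomorphism type from the numerical data: on~$\PP^1$ every bundle splits as a sum of line bundles, so each of the three assertions reduces to excluding "unbalanced" splittings with the same rank and degree. The honest way to do this is to track the actual maps in \cref{lemma:factorization} rather than wave at slope inequalities — in particular, the factorization~$\Rscr \xrightarrow{V_2^\vee} \Ker(\cdots) \xrightarrow{V_1^\vee} \Oscr_\GG$ restricted to~$L_i$ should identify~$\Rscr|_{L_i}$ with~$V_2^\vee \otimes \Oscr_{L_i}(-1)$ directly, since the middle kernel term restricts to~$V_2^\vee \otimes \Oscr_{L_i}(-1) \oplus \Oscr_{L_i}$ by the Euler-type sequence on~$L_i = \PP(V_1)$ and the first map is the obvious split inclusion. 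I would write the normal bundle computation via~$\Tscr_\GG|_{L_i}$ as the primary argument since it is shortest, and relegate the cohomology-vanishing checks ($\HHH^1(\PP^1,\Oscr) = 0$ keeping restricted sequences exact, and the subbundle/quotient-bundle slope bounds) to a sentence each, as they are routine. An alternative for the normal bundle, worth a remark, is to use~$\HH \cong \Bl_L\GG$ from \cref{theorem:description-H}: the exceptional divisor over~$L_i$ is~$\PP(\Nscr_{L_i}\GG)$, and \cref{theorem:description-H}'s proof already identifies the fibers as~$\PP^2 = \Sym^2\PP^1$, which with a little more bookkeeping recovers~$\Oscr_{\PP^1}(2)^{\oplus 3}$ up to twist; but the tangent-bundle route avoids this and is cleaner.
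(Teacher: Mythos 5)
Your proposal is correct and follows essentially the same route as the paper: the first two restrictions come from the universal property of~$\GG$ and the Euler-type sequence on~$L_i=\PP(V_1)$ from \cref{lemma:factorization}, and the normal bundle is computed as the cokernel of~$\Tscr_{\PP^1}\hookrightarrow\HHom(\Rscr,\Qscr)|_{L_i}\cong\Oscr_{\PP^1}(2)^{\oplus 4}$. The only difference is that you spell out why the splitting types cannot be unbalanced (which the paper leaves implicit); for the normal bundle this is immediate since any injection~$\Oscr_{\PP^1}(2)\to\Oscr_{\PP^1}(2)^{\oplus 4}$ is a constant linear map tensored with~$\Oscr_{\PP^1}(2)$ and hence split.
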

\begin{proof}
  The first two isomorphisms follows from~\eqref{equation:bijection-universal-property} since the~$L_i$ are embedded using an exact sequence
  \begin{equation}
    0 \to \Oscr_{L_i}(-1)^{\oplus 2} \to V \otimes_k \Oscr_{L_i} \to \Oscr_{L_i}(1)^{\oplus 2} \to 0
  \end{equation}
  as in~\eqref{equation:epimorphism}.

  For the third isomorphism, since the tangent bundle~$\Tscr_\GG$ can be expressed as~$\Hscr om(\Rscr,\Qscr)$, we get for the normal bundle
  \begin{equation}
    \begin{aligned}
      \Nscr_{L_i} \GG &= \coker(\Tscr_{\PP^1} \to \Hscr om(\Rscr,\Qscr)|_{\PP^1}) \\
      &= \coker(\Oscr_{\PP^1}(2) \to \Oscr_{\PP^1}(2)^{\oplus 4}) \\
      &=\Oscr_{\PP^1}(2)^{\oplus 3}.
    \end{aligned}
  \end{equation}
\end{proof}

From the description of the normal bundle~$\Nscr_{L_i}\GG$ in \cref{lemma:NLi-QLi} we find that
\begin{equation}
  E_i\cong\PProj_{\PP^1}\left( \Sym(\Oscr_{\PP^1}(2)^{\oplus3})^\vee \right)\cong\PP^1\times\PP^2.
\end{equation}
We will use the following notation
\begin{equation}
  \Oscr_{E_i}(m,n)\coloneqq \Oscr_{\PP^1}(m) \boxtimes \Oscr_{\PP^2}(n).
\end{equation}
Whenever we write~$\Oscr_E(m,n)$ this means that we use this construction for both connected components.

For the final lemma, recall that~$\Oscr_E(E)$ is shorthand for~$\Oscr_\HH(E)|_E=j^*(\Oscr_\HH(E))$, which can also be written as~$\Nscr_E\HH$. Using this notation we can describe~$\omega_\HH$ and two bundles on the exceptional locus~$E$ as follows.

\begin{lemma}
  \label{lemma:bundles-H-and-E}
  There are isomorphisms
  \begin{equation}
    \omega_{\HH} \cong p^*\left( \bigwedge\nolimits^2\Qscr \right)^{\otimes -4}(2E),
  \end{equation}
  and
  \begin{equation}
    \label{equation:restriction-canonical}
    \begin{aligned}
      \Oscr_{E}(E) &\cong \Oscr_{E}(2,-1), \\
      \omega_{\HH}|_{E} &\cong \Oscr_E(-4,-2).
    \end{aligned}
  \end{equation}
\end{lemma}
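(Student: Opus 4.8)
The plan is to obtain all three isomorphisms from the identification $\HH\cong\Bl_L\GG$ of \cref{theorem:description-H} together with the bundle computations of \cref{lemma:NLi-QLi}; write $p\colon\HH\to\GG$ for the blow-down morphism. First I would compute $\omega_\GG$: from $\Tscr_\GG\cong\HHom(\Rscr,\Qscr)\cong\Rscr^\vee\otimes\Qscr$, as already used in the proof of \cref{lemma:NLi-QLi}, and from $\bigwedge^2\Rscr\cong(\bigwedge^2\Qscr)^{\otimes-1}$, which follows from $\det(V\otimes\Oscr_\GG)\cong\Oscr_\GG$ and \eqref{equation:tautological-sequence}, taking the top exterior power of a tensor product of two rank-$2$ bundles gives $\det\Tscr_\GG\cong(\bigwedge^2\Rscr)^{\otimes-2}\otimes(\bigwedge^2\Qscr)^{\otimes2}\cong(\bigwedge^2\Qscr)^{\otimes4}$, hence $\omega_\GG\cong(\bigwedge^2\Qscr)^{\otimes-4}$. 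Since $L=L_1\sqcup L_2$ is a disjoint union of two copies of $\PP^1$ in the smooth fourfold $\GG$, it is smooth of codimension $3$, so the standard formula for the canonical bundle of a blow-up along a smooth centre gives $\omega_\HH\cong p^*\omega_\GG\otimes\Oscr_\HH((3-1)E)\cong p^*(\bigwedge^2\Qscr)^{\otimes-4}(2E)$, the first isomorphism.

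For $\Oscr_E(E)$, since $L_1\cap L_2=\emptyset$ the exceptional divisor splits as $E=E_1\sqcup E_2$ with each $E_i$ the projectivised normal bundle $\PP(\Nscr_{L_i}\GG)\to L_i$, and the standard description of a blow-up identifies $\Oscr_{E_i}(E)\cong\Nscr_{E_i}\HH$ with the relative tautological sub-line-bundle, namely the one restricting to $\Oscr_{\PP^2}(-1)$ on a fibre of $E_i\to L_i$. By \cref{lemma:NLi-QLi}, $\Nscr_{L_i}\GG\cong\Oscr_{\PP^1}(2)^{\oplus3}$, so $E_i\cong\PP^1\times\PP^2$, and I would read off the bidegree of $\Oscr_{E_i}(E)\cong\Oscr_{E_i}(a,b)$ from two restrictions: on a fibre $\PP^2$ of $E_i\to L_i$ it is $\Oscr_{\PP^2}(-1)$, giving $b=-1$, and along the section of $E_i\to L_i$ cut out by one of the three $\Oscr_{\PP^1}(2)$-summands of $\Nscr_{L_i}\GG$ it restricts to that summand, giving $a=2$. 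Hence $\Oscr_E(E)\cong\Oscr_E(2,-1)$; equivalently, trivialising $\Nscr_{L_i}\GG$ by twisting with $\Oscr_{\PP^1}(-2)$ shifts the relative $\Oscr(1)$ by the pullback of $\Oscr_{\PP^1}(-2)$.

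The third isomorphism now follows by restricting the formula for $\omega_\HH$ to $E$: on $E_i$ the morphism $p$ is the projection $\PP^1\times\PP^2\to\PP^1=L_i$, and $(\bigwedge^2\Qscr)|_{L_i}\cong\bigwedge^2(\Qscr|_{L_i})\cong\bigwedge^2(\Oscr_{\PP^1}(1)^{\oplus2})\cong\Oscr_{\PP^1}(2)$ by \cref{lemma:NLi-QLi}, so $p^*(\bigwedge^2\Qscr)^{\otimes-4}|_{E_i}\cong\Oscr_{E_i}(-8,0)$; combining with $\Oscr_\HH(2E)|_{E_i}\cong\Oscr_{E_i}(E)^{\otimes2}\cong\Oscr_{E_i}(4,-2)$ from the previous step yields $\omega_\HH|_{E_i}\cong\Oscr_{E_i}(-4,-2)$. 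The geometry is routine throughout; the only real point of care is the middle step, where one must be consistent about the projectivisation conventions (classical $\PP(\Nscr_{L_i}\GG)$ versus $\PProj(\Sym\Nscr_{L_i}\GG)$, and which of $\Oscr_E(\pm E)$ is the relative $\Oscr(\pm1)$) and must not drop the twist produced by $\Nscr_{L_i}\GG$ being $\Oscr_{\PP^1}(2)^{\oplus3}$ rather than trivial.
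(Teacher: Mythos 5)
Your argument is correct, and for the first and third isomorphisms it coincides with the paper's proof (blow-up formula for the canonical bundle with $r-1=2$, then restriction of $p^*\omega_\GG(2E)$ to $E_i$ using $\omega_\GG|_{L_i}\cong\Oscr_{\PP^1}(-8)$ and $\Oscr(2E)|_{E_i}\cong\Oscr_{E_i}(4,-2)$). Where you genuinely diverge is the middle isomorphism $\Oscr_E(E)\cong\Oscr_E(2,-1)$: you identify $\Oscr_{E_i}(E)$ directly with the tautological sub-line-bundle of $q^*\Nscr_{L_i}\GG$ and read off the bidegree from its restrictions to a fibre and to a section, i.e.\ from the identity $\Oscr_{\PP(\Nscr\otimes\Lscr)}(-1)\cong\Oscr_{\PP(\Nscr)}(-1)\otimes q^*\Lscr$ applied to $\Nscr_{L_i}\GG\cong\Oscr_{\PP^1}(2)\otimes k^3$. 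The paper instead avoids any appeal to the tautological bundle: it applies adjunction twice, once for $L_i\subset\GG$ and once for $E\subset\HH$, substitutes $\omega_{E_i}\cong\Oscr_{E_i}(-2,-3)$, and solves the resulting relation $\Oscr_E(3E)\cong\Oscr_E(6,-3)$ for $\Oscr_E(E)$ using that $\Pic(\PP^1\times\PP^2)$ is torsion-free. Your route is more direct and explains \emph{why} the answer is $(2,-1)$ (the $2$ is exactly the twist coming from $\Nscr_{L_i}\GG$ being $\Oscr_{\PP^1}(2)^{\oplus3}$ rather than trivial), at the cost of the convention bookkeeping you rightly flag; the paper's route is convention-free but purely formal. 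A small bonus of your write-up is that you actually derive $\omega_\GG\cong(\bigwedge^2\Qscr)^{\otimes-4}$ from the tautological sequence and $\Tscr_\GG\cong\uHom(\Rscr,\Qscr)$, whereas the paper only verifies its restriction to $L_i$ and otherwise takes this standard fact for granted.
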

\begin{proof}
  Applying the adjunction formula and the isomorphism~$\Nscr_{L_i} \GG \cong \Oscr_{\PP^1}(2)^{\oplus 3}$ from \cref{lemma:NLi-QLi}, we find
  \begin{equation}
    \begin{aligned}
      \omega_{\PP^1} &\cong i^*(\omega_{\GG}) \otimes \det (\Nscr_{L_i}\GG) \\
      \Leftrightarrow \Oscr_{\PP^1}(-2) &\cong \omega_{\GG}|_{\PP^1} \otimes \Oscr_{\PP^1}(6) \\
      \Leftrightarrow \Oscr_{\PP^1}(-8) &\cong \omega_{\GG}|_{\PP^1}.
    \end{aligned}
  \end{equation}

  For the canonical bundles, we get
  \begin{equation}
    \label{equation:omega-H}
    \omega_{\HH}\cong p^*(\omega_{\GG}) \otimes \Oscr_\HH(2E), \\
  \end{equation}
  and
  \begin{equation}
    \label{equation:omega-E}
    \omega_{E}\cong(\omega_{\HH} \otimes \Oscr_{\HH}(E))|_{E}.
  \end{equation}
  Now plug~\eqref{equation:omega-H} into~\eqref{equation:omega-E} and use~$\omega_{E_i} \cong \Oscr_{E_i}(-2,-3)$ to get
  \begin{equation}
    \begin{aligned}
      \Oscr_E(-2,-3) & \cong \Oscr_E(-8,0) \otimes \Oscr_E(3E) \\
      \Leftrightarrow \Oscr_E(6,-3) &\cong \Oscr_E(3E) \\
      \Leftrightarrow \Oscr_E(E) &\cong \Oscr_E(2,-1).
    \end{aligned}
  \end{equation}
  Finally~\eqref{equation:omega-H} provides
  \begin{equation}
    \omega_{\HH}|_{E} \cong \Oscr_E(-8,0) \otimes \Oscr_E(4,-2) \cong \Oscr_E(-4,-2),
  \end{equation}
  completing the proof.
\end{proof}

\subsection{The derived category of \texorpdfstring{$\Grass(1,3)$}{Gr(1,3)} and Orlov's blow-up formula}
\label{subsection:derived-categories}

The following theorem is a particular case of a more general result obtained in~\cite{MR3371493,MR939472}.
\begin{theorem}
  \label{theorem:decomposition-G}
  The derived category of~$\GG$ has a full and strong exceptional collection
  \begin{equation}
    \derived^\bounded(\GG)=\left\langle
      \bigwedge\nolimits^2 \Rscr \otimes \bigwedge\nolimits^2 \Rscr,
      \bigwedge\nolimits^2 \Rscr \otimes \Rscr,
      \bigwedge\nolimits^2\Rscr,
      \Sym^2 \Rscr,
      \Rscr,
      \Oscr_{\GG}
    \right\rangle.
  \end{equation}
\end{theorem}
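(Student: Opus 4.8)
The plan is to recognise this as Kapranov's exceptional collection on a Grassmannian~\cite{MR939472} and to verify its two defining features by the standard mechanisms: Borel--Weil--Bott for the $\Ext$-computations, and a Koszul resolution of the diagonal for fullness.

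First one rewrites everything in terms of Schur functors. The tautological sub- and quotient bundles $\Rscr,\Qscr$ on $\GG$ both have rank two and $\dim V=4$, so using the identity $\mathbb{S}^{(a,b)}\Escr\cong\Sym^{a-b}\Escr\otimes(\bigwedge\nolimits^{2}\Escr)^{\otimes b}$ valid for any rank-two bundle $\Escr$, the six objects in the statement are exactly the Schur functors $\mathbb{S}^{\lambda}\Rscr$ with $\lambda$ a Young diagram inside the $2\times 2$ box, i.e. $\lambda\in\{(0,0),(1,0),(2,0),(1,1),(2,1),(2,2)\}$; this is $6=\binom{4}{2}=\rk K_{0}(\GG)$ members, the expected number. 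To check the collection is strong exceptional I would compute, for each ordered pair of members $\Escr',\Escr''$, the graded space $\Ext^{\bullet}_{\GG}(\Escr',\Escr'')=\HHH^{\bullet}(\GG,(\Escr')^{\vee}\otimes\Escr'')$. Using $\Rscr^{\vee}\cong\Rscr\otimes(\bigwedge\nolimits^{2}\Rscr)^{\vee}$ and the Clebsch--Gordan rule, each such tensor product decomposes into a sum of irreducible homogeneous bundles, and the cohomology of each summand is read off from Bott's algorithm on $\GG$. This yields at once $\Ext^{\bullet}(\Escr,\Escr)=k$ for every member, the vanishing making the collection exceptional, and the vanishing of all positive $\Ext$-groups making it strong. (For example $\Ext^{\bullet}(\Oscr_{\GG},(\bigwedge\nolimits^{2}\Rscr)^{\otimes 2})=\HHH^{\bullet}(\GG,(\bigwedge\nolimits^{2}\Rscr)^{\otimes 2})=0$ because $\omega_{\GG}\cong(\bigwedge\nolimits^{2}\Rscr)^{\otimes 4}$, whereas $\Ext^{\bullet}(\Rscr,\Oscr_{\GG})=\HHH^{\bullet}(\GG,\Rscr^{\vee})\cong V^{\vee}$ is concentrated in degree $0$ --- this last point is what forces $\Rscr$ to sit to the left of $\Oscr_{\GG}$ in the collection.)

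For fullness I would resolve the structure sheaf of the diagonal $\Delta\subset\GG\times\GG$. Writing $p_{1},p_{2}$ for the two projections, the composite $p_{1}^{*}\Rscr\hookrightarrow V\otimes\Oscr\twoheadrightarrow p_{2}^{*}\Qscr$ is a section of $\Escr\coloneqq p_{1}^{*}\Rscr^{\vee}\otimes p_{2}^{*}\Qscr$, a bundle of rank $\rk\Rscr\cdot\rk\Qscr=4=\dim\GG$, and it vanishes exactly and transversally along $\Delta$ (at $(x,y)$ the fibre is zero iff $\Rscr_{x}\subseteq\ker(V\to\Qscr_{y})=\Rscr_{y}$, i.e. $x=y$). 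Hence the Koszul complex of this section,
\begin{equation*}
  0\to\bigwedge\nolimits^{4}\Escr^{\vee}\to\bigwedge\nolimits^{3}\Escr^{\vee}\to\bigwedge\nolimits^{2}\Escr^{\vee}\to\Escr^{\vee}\to\Oscr_{\GG\times\GG}\to\Oscr_{\Delta}\to 0,
\end{equation*}
is a finite resolution of $\Oscr_{\Delta}$, and by the Cauchy formula $\bigwedge\nolimits^{m}\Escr^{\vee}\cong\bigoplus_{|\lambda|=m}(\mathbb{S}^{\lambda}\Rscr)\boxtimes(\mathbb{S}^{\lambda'}\Qscr^{\vee})$, the sum being over $\lambda$ inside the $2\times 2$ box; so every term is a direct sum of sheaves $E_{i}\boxtimes F_{i}$ with $E_{i}$ one of the six members of the collection. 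Since $\Oscr_{\Delta}$ is the kernel of the identity Fourier--Mukai functor, convolving this resolution against an arbitrary object of $\derived^{\bounded}(\GG)$ exhibits it as an iterated extension of shifts of the $E_{i}$, which gives fullness.

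The content is entirely classical and I expect no conceptual obstacle; the real work is bookkeeping, and the main nuisance is the Bott step, with $\binom{6}{2}$ pairs to treat, each needing a plethysm decomposition followed by Bott's algorithm, and one must keep the conventions straight --- sub versus quotient bundle, and which direction of semiorthogonality the chosen ordering imposes. (Since $\GG$ is also a smooth $4$-dimensional quadric one could instead appeal to Kapranov's exceptional collection on quadrics, but the tautological-bundle collection above is the one compatible with the deformation-theoretic arguments later in the paper.) In any case all of this is carried out in the cited references~\cite{MR939472,MR3371493}, so it is enough to invoke those.
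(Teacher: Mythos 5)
The paper offers no proof of this statement at all: it is quoted as a special case of \cite{MR939472,MR3371493} (and the subsequent remark notes that only the exceptional pair $\langle\Rscr,\Oscr_\GG\rangle$ is actually used later). Your sketch is a correct outline of exactly the argument in those references --- Schur functors $\mathbb{S}^\lambda\Rscr$ for $\lambda$ in the $2\times2$ box, Bott vanishing for strong exceptionality, Koszul/Cauchy resolution of the diagonal for fullness --- and, like the paper, ultimately defers to the same citations, so there is nothing to object to.
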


\begin{remark}
  In fact, we will only need the exceptional pair~$\langle \Rscr,\mathcal{O}_\GG\rangle$, which can also be established by elementary means.
\end{remark}

We know from \cref{theorem:description-H} that~$\HH \cong \Bl_L(\GG)$, so the following classical result of Orlov describes the derived category of~$\HH$. Let~$Y$ be a smooth subvariety of codimension~$r$ in a smooth algebraic variety~$X$. Then there exists a cartesian square
\begin{equation}
  \label{equation:cartesian-square-blowup}
  \begin{tikzcd}
    E_Y \arrow{r}{j} \arrow{d}{q} & \Bl_YX \arrow{d}{p} \\
    Y \arrow{r}{i} & X
  \end{tikzcd}
\end{equation}
where~$i$ and~$j$ are closed immersions, and~$q\colon E_Y \to Y$ is the projective bundle of the exceptional divisor~$E_Y$ in~$\Bl_YX$ on~$Y$.

\begin{theorem}\cite[Theorem~4.3]{MR1208153}
  \label{theorem:blowup}
  There is a semi-orthogonal decomposition
  \begin{equation}
    \derived^\bounded(\Bl_YX) = \langle \derived^\bounded(X), \derived^\bounded(Y)_0, \ldots, \derived^\bounded(Y)_{r-2} \rangle.
  \end{equation}
\end{theorem}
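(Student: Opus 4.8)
The statement in question is Orlov's blow-up formula (Theorem~4.3 of \cite{MR1208153}), so the plan is to recall the standard proof rather than invent a new one. I will work with the cartesian square \eqref{equation:cartesian-square-blowup} and the two functors it provides: the derived pullback $p^*\colon\derived^\bounded(X)\to\derived^\bounded(\Bl_YX)$, which is fully faithful because $\mathbf{R}p_*\Oscr_{\Bl_YX}\cong\Oscr_X$ (a local computation on the blow-up of a smooth center, using $\mathbf{R}p_*\Oscr_{E_Y}(-mE_Y)$ for $0\le m\le r-1$), and for each twist $k=1,\dots,r-1$ the functor $\Psi_k\coloneqq j_*\bigl(q^*(-)\otimes\Oscr_{E_Y}(kE_Y)\bigr)\colon\derived^\bounded(Y)\to\derived^\bounded(\Bl_YX)$. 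Write $\derived^\bounded(Y)_k$ for the essential image of $\Psi_k$.

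The proof then has three parts. First, show each $\Psi_k$ is fully faithful: since $q$ is a $\PP^{r-1}$-bundle, $q^*$ is fully faithful, and $j$ is a divisorial embedding, so one reduces $\Hom_{\Bl_YX}(\Psi_k A,\Psi_k B)$ via adjunction and the projection formula to a computation on $E_Y$ of $\mathbf{R}q_*\bigl(\uRHom(q^*A,q^*B)\otimes\Lscr^{\bullet}\bigr)$, where $\Lscr^{\bullet}=j^*j_*\Oscr_{E_Y}$ fits in a triangle with $\Oscr_{E_Y}(-E_Y)[1]$; since $\Oscr_{E_Y}(-E_Y)\cong\Oscr_{q}(1)$ is the relative hyperplane bundle, the relevant $\mathbf{R}q_*$ of its powers vanish in the needed range, collapsing everything to $\RHom_Y(A,B)$. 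Second, verify the semiorthogonality relations: $\Hom(\Psi_k A,p^*C)=0$ follows because $\mathbf{R}p_*\circ\Psi_k$ is built from $\mathbf{R}q_*\Oscr_{E_Y}(kE_Y)=0$ for $1\le k\le r-1$ after adjunction $p_*=i_*q_*$; and $\Hom(\Psi_k A,\Psi_l B)=0$ for $k>l$ reduces, as in the first part, to $\mathbf{R}q_*$ of $\Oscr_{E_Y}(mE_Y)$ for $-( r-1)\le m=l-k\le -1$, which again vanishes. This gives the claimed ordering of the blocks.

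Third, prove generation: the subcategory $\bigl\langle p^*\derived^\bounded(X),\derived^\bounded(Y)_1,\dots,\derived^\bounded(Y)_{r-1}\bigr\rangle$ is all of $\derived^\bounded(\Bl_YX)$. The clean way is to check it contains a spanning class — e.g.\ all structure sheaves $\Oscr_x$ of closed points of $\Bl_YX$. A point off $E_Y$ lies in $p^*\derived^\bounded(X)$; a point on $E_Y$ is handled by resolving $j_*\Oscr_{y'}$ (for $y'\in E_Y$) against the Koszul-type resolution of $\Oscr_{E_Y}$ on $\Bl_YX$ and against $p^*\Oscr_{q(y')}$, exhibiting $\Oscr_{y'}$ in the generated category; one then invokes the standard fact that a full triangulated subcategory closed under the relevant operations and containing a spanning class, inside a category with a Serre functor, is everything. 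Alternatively, one can argue directly with the blow-up Koszul complex $0\to p^*\Iscr_Y^{\otimes?}\to\cdots$, but the spanning-class route is cleanest.

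The main obstacle is the third step: full faithfulness and semiorthogonality are essentially bookkeeping with $\mathbf{R}q_*\Oscr_{q}(m)$, but generation genuinely uses the geometry of the blow-up (the structure of $\Oscr_{E_Y}(-E_Y)$ as the relative $\Oscr(1)$ and the resolution of $\Oscr_{\Bl_YX}$-modules supported on $E_Y$) and is where one must be careful that the $r-1$ twisted copies of $\derived^\bounded(Y)$, together with $p^*\derived^\bounded(X)$, exactly account for the extra directions introduced by blowing up. Since this is a cited classical theorem, in the paper I would simply refer to \cite[theorem~4.3]{MR1208153} and only sketch the functors $p^*$ and $\Psi_k$ that we actually use in the sequel.
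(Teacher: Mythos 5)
The paper does not prove this statement at all: it is quoted verbatim from Orlov with the citation \cite[theorem~4.3]{MR1208153}, and the surrounding text only unpacks the notation ($\mathbf{L}p^*$ and the functors \eqref{equation:orlov-blowup-fully-faithful}) that is used later. So there is nothing in the paper to compare your argument against; what you have written is, correctly, a sketch of Orlov's original proof (full faithfulness of $\mathbf{L}p^*$ via $\mathbf{R}p_*\Oscr_{\Bl_YX}\cong\Oscr_X$, full faithfulness of the $\Psi_k$ via the triangle for $\mathbf{L}j^*j_*$ and the vanishing of $\mathbf{R}q_*$ of the relative $\Oscr(m)$ for $-(r-1)\le m\le -1$, then semiorthogonality and generation), and your final remark --- that in the paper one would simply cite the reference --- is exactly what the authors do. One piece of bookkeeping in your step two is off, though: the adjunction $\Hom(p^*C,\Psi_kA)\cong\Hom(C,\mathbf{R}p_*\Psi_kA)$ controls the Homs \emph{from} $p^*\derived^\bounded(X)$ \emph{into} the blocks $\derived^\bounded(Y)_k$, not $\Hom(\Psi_kA,p^*C)$ as you wrote; the latter direction (which is the one required for the ordering $\langle\derived^\bounded(X),\derived^\bounded(Y)_1,\ldots\rangle$ as stated, under the usual convention that Homs from right to left vanish) needs the $(j_*,j^!)$ adjunction or Serre duality, and getting the range of twists right in that computation is precisely where the various sign conventions for $\Oscr_{E_Y}(k)$ versus $\Oscr_{E_Y}(kE_Y)$ matter. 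This is fixable convention noise rather than a gap in the mathematics, but if you were to write the argument out in full you would need to settle it. Your generation step via a spanning class of point sheaves is a legitimate alternative to Orlov's original argument (which shows directly that the orthogonal complement of the generated subcategory vanishes); both work, and neither appears in this paper.
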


In this statement,~$\derived^\bounded(X)$ is the full subcategory of~$\derived^\bounded(\Bl_YX)$ which is the image of~$\derived^\bounded(X)$ under
\begin{equation}
  \mathbf{L}p^*\colon\derived^\bounded(X) \to \derived^\bounded(\Bl_YX),
\end{equation}
and~$\derived^\bounded(Y)_k$ is the full subcategory of~$\derived^\bounded(\Bl_YX)$ which is the image of~$\derived^\bounded(Y)$ under
\begin{equation}
  \label{equation:orlov-blowup-fully-faithful}
  \mathbf{R}j_*(\Oscr_{E_Y}(k) \otimes q^*(-)):\derived^\bounded(Y) \to \derived^\bounded(\Bl_YX).
\end{equation}

\begin{corollary}
  There is a semi-orthogonal decomposition
  \begin{equation}
    \label{equation:decomposition-Db(H)}
    \begin{aligned}
      \derived^\bounded(\HH)
      &=\left\langle \derived^\bounded(\GG),\derived^\bounded(L)_0,\derived^\bounded(L)_1 \right\rangle \\
      &=\left\langle \derived^\bounded(\GG),\derived^\bounded(L_0)_0,\derived^\bounded(L_0)_1,\derived^\bounded(L_1)_0,\derived^\bounded(L_1)_1 \right\rangle \\
    \end{aligned}
  \end{equation}
  In particular there exists a full exceptional collection of length~14\ in~$\derived^\bounded(\HH)$.
\end{corollary}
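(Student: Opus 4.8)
The plan is to combine \cref{theorem:blowup} (Orlov's blow-up formula) with \cref{theorem:description-H} and \cref{theorem:decomposition-G}. First I would invoke \cref{theorem:description-H} to identify $\HH$ with $\Bl_L\GG$, where $L=L_1\sqcup L_2$ is a disjoint union of two copies of $\PP^1$ sitting inside $\GG$ with codimension $r=3$ (since $\dim\GG=4$ and each $L_i$ is a line). Because $L$ is a disjoint union of two smooth closed subvarieties, blowing up $L$ is the same as blowing up $L_1$ and then $L_2$ (their blow-up loci are disjoint), so \cref{theorem:blowup} applies with $Y=L$ and yields the first line of \eqref{equation:decomposition-Db(H)}, with $r-1=2$ copies of $\derived^\bounded(L)$. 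Since $\derived^\bounded(L_1\sqcup L_2)\simeq\derived^\bounded(L_1)\times\derived^\bounded(L_2)$, each $\derived^\bounded(L)_k$ splits further into $\derived^\bounded(L_1)_k$ and $\derived^\bounded(L_2)_k$; one checks these two pieces are mutually semi-orthogonal inside a single block because the functors in \eqref{equation:orlov-blowup-fully-faithful} are supported on the disjoint exceptional divisors $E_1$ and $E_2$, giving the second line of \eqref{equation:decomposition-Db(H)}.

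For the final assertion about the full exceptional collection of length $14$, I would count: $\derived^\bounded(\GG)$ has a full exceptional collection of length $6$ by \cref{theorem:decomposition-G}, and each $\derived^\bounded(L_i)=\derived^\bounded(\PP^1)$ has a full exceptional collection of length $2$ (for instance $\langle\Oscr_{\PP^1},\Oscr_{\PP^1}(1)\rangle$). Since a semi-orthogonal decomposition into admissible subcategories, each of which admits a full exceptional collection, yields a full exceptional collection on the total category by concatenating (the blocks further to the left in the decomposition), the length is $6 + 4\cdot 2 = 14$. I would remark that the exceptional objects in the $\derived^\bounded(L_i)_k$ blocks are explicitly $\mathbf{R}(j_i)_*(\Oscr_{E_i}(k)\otimes q_i^*\Oscr_{\PP^1}(a))$ for $k\in\{1,2\}$ and suitable $a$, which will be the starting point for the more refined collection used later in the paper.

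The main obstacle — really the only nontrivial point — is justifying that the two blocks $\derived^\bounded(L_i)_k$ obtained from the two connected components genuinely refine a single $\derived^\bounded(L)_k$ in a semi-orthogonal way, i.e. that $\Hom$'s between objects pushed forward from $E_1$ and objects pushed forward from $E_2$ vanish in both directions. This is immediate from $L_1\cap L_2=\emptyset$ (noted after \eqref{equation:definition-L}), which forces $E_1\cap E_2=\emptyset$ in $\HH$, so the supports of the relevant complexes are disjoint and all such $\Hom$-spaces vanish; hence the ordering of the two components within each degree-$k$ block is irrelevant. Everything else is bookkeeping: compatibility of iterated blow-ups along disjoint centres with Orlov's formula, and the standard fact that concatenating full exceptional collections across a semi-orthogonal decomposition produces a full exceptional collection.
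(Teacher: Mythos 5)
Your argument is correct and is exactly the route the paper intends: the corollary is stated without proof as an immediate consequence of \cref{theorem:description-H}, Orlov's blow-up formula (\cref{theorem:blowup}) applied to the codimension-$3$ centre $L=L_1\sqcup L_2$, and the length-$6$ collection of \cref{theorem:decomposition-G}, with the splitting $\derived^\bounded(L)\simeq\derived^\bounded(L_1)\times\derived^\bounded(L_2)$ and the count $6+4\cdot 2=14$ handled just as you describe. Your extra care about the semi-orthogonality of the two components within each block (via disjointness of the supports $E_1$ and $E_2$) is a correct filling-in of a detail the paper leaves implicit.
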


\begin{remark}
  \label{remark:krug-sosna-embedding}
  This is not the only way of obtaining a semi-orthogonal decomposition of the Hilbert scheme in this situation. For an arbitrary surface~$S$ one obtains using equivariant derived categories \cite{MR2584227} and the description of the Hilbert scheme of points as a quotient that there exists a full (and strong) exceptional collection in~$\derived^\bounded(\Hilb^nS)$ provided there exists a full (and strong) exceptional collection in~$\derived^\bounded(S)$ \cite[Proposition~1.3 and Remark~4.6]{MR3397451}.
\end{remark}

\section{Embedding derived categories of noncommutative quadrics}

\subsection{Geometric squares and deformations of \texorpdfstring{$\Hilb^2(\quadric)$}{Hilb(2,S)}}
\label{subsection:geometric-squares}
Recall~\cite{MR939472} that for the derived category of the quadric~$Q$ there is a full and strong exceptional collection
\begin{equation}
  \label{equation:block-collection}
  \begin{tikzcd}
    & \Oscr_Q(-1,0) \arrow[swap]{dr}{d_2} \arrow[shift left = .6em]{dr}{d_1} & \\
    \Oscr_Q(-1,-1) \arrow[swap]{ur}{c_2} \arrow[shift left = .6em]{ur}{c_1} \arrow[swap]{dr}{a_2} \arrow[shift left = .6em]{dr}{a_1} & & \Oscr_Q(0,0) \\
    & \Oscr_Q(0,-1) \arrow[swap]{ur}{b_2} \arrow[shift left = .6em]{ur}{b_1} &
  \end{tikzcd}
\end{equation}
with relations~$b_ia_j = d_jc_i$, for~$i,j \in \{1,2\}$. We isolate some of the properties of this exceptional collection in the following definition.
\begin{definition}
  A \emph{geometric square} is a septuple~$\square=(V,U_0^0, U_1^0, U_0^1, U_1^1, \phi_0, \phi_1)$, where~$V$ is a~4\dash dimensional vector space, the~$U_j^i$ are~2\dash dimensional vector spaces, and the~$\phi_i$ are isomorphisms
  \begin{equation}
    \phi_i\colon V\to U_0^i \otimes_k U_1^i.
  \end{equation}
\end{definition}

Using \cref{lemma:factorization}, the two isomorphisms~$\phi_i$ in a geometric square give rise to two embeddings~$L_i\coloneqq\PP(U_0^i) \hookrightarrow \GG$ and sheaves
\begin{equation}
  \label{equation:definition-Ki}
  \mathcal{K}_i\coloneqq\ker\left( \HHH^0(\GG,\Oscr_{L_i}(1))\otimes_k\Oscr_\GG\to\Oscr_{L_i}(1) \right).
\end{equation}

\begin{proposition}
  \label{proposition:relations}
  For a sufficiently generic geometric square~$\square$, the Ext-quiver of the endomorphism algebra
  \begin{equation}
    \label{equation:endomorphism-algebra}
    Q_{\square}\coloneqq\End_\GG(\Rscr \oplus \mathcal{K}_0 \oplus \mathcal{K}_1 \oplus \Oscr_\GG)
  \end{equation}
  is of the form~\eqref{equation:block-collection}, and moreover
  \begin{equation}
    \dim \Hom(\Rscr, \Oscr_\GG) = 4.
  \end{equation}
\end{proposition}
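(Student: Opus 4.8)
The plan is to compute the Ext-quiver of $Q_\square$ by working out all the relevant $\Hom$-, $\Ext^1$- and $\Ext^{\geq 2}$-groups between the four summands $\Rscr$, $K_0$, $K_1$, $\Oscr_\GG$, and then matching the resulting quiver with relations against~\eqref{equation:block-collection}. I would first record that $\langle \Rscr, \Oscr_\GG\rangle$ is an exceptional pair on $\GG$ (as noted after \cref{theorem:decomposition-G}), and similarly that each $K_i$ is exceptional, so the diagonal $\Hom$'s are $k$ and the diagonal $\Ext^{\geq 1}$'s vanish. The key structural input is \cref{lemma:factorization}: the isomorphism $\phi_i\colon V\to U_0^i\otimes U_1^i$ gives a factorization $\Rscr\xrightarrow{(U_1^i)^\vee} K_i\xrightarrow{(U_0^i)^\vee}\Oscr_\GG$ of the canonical map $\Rscr\xrightarrow{V^\vee}\Oscr_\GG$. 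This immediately identifies the arrow spaces: $\Hom(\Rscr,K_i)\supseteq (U_1^i)^\vee$ (a $2$-dimensional space, matching $\{a_1,a_2\}$ and $\{c_1,c_2\}$ in~\eqref{equation:block-collection}), $\Hom(K_i,\Oscr_\GG)\supseteq (U_0^i)^\vee$ (matching $\{b_1,b_2\}$ and $\{d_1,d_2\}$), and the composite recovers $\Hom(\Rscr,\Oscr_\GG)\ni V^\vee$.

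For the dimension claim $\dim\Hom(\Rscr,\Oscr_\GG)=4$ I would argue directly: $\Hom_\GG(\Rscr,\Oscr_\GG)\cong\HHH^0(\GG,\Rscr^\vee)$, and since $\Rscr\hookrightarrow V\otimes\Oscr_\GG$ is a sub-bundle with $\HHH^0(\GG,\Rscr)\cong V^\vee$, dualizing the tautological sequence~\eqref{equation:tautological-sequence} and taking cohomology (using $\HHH^0(\GG,\Qscr^\vee)=0$, which follows from Bott vanishing / the exceptional collection in \cref{theorem:decomposition-G}, e.g.\ $\Ext^\bullet(\Oscr_\GG,\Qscr^\vee)=0$) gives $\HHH^0(\GG,\Rscr^\vee)\cong V^\vee$, which is $4$-dimensional. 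One then checks the factorization map $(U_1^i)^\vee\otimes(U_0^i)^\vee\to\Hom(\Rscr,\Oscr_\GG)$ coming from composing the two arrow spaces is exactly the isomorphism $(U_0^i\otimes U_1^i)^\vee\xrightarrow{\phi_i^\vee}V^\vee$ — this encodes the relation $b_ia_j=d_jc_i$ once both factorizations ($i=0$ and $i=1$) are placed side by side, since both composites equal multiplication by $V^\vee$.

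The remaining, more technical, part is to verify the \emph{genericity} hypotheses: that $\Hom(K_0,K_1)=\Hom(K_1,K_0)=0$ and $\Hom(\Oscr_\GG,\Rscr)=\Hom(\Oscr_\GG,K_i)=\Hom(K_i,\Rscr)=0$, so that no extra arrows appear, and that all $\Ext^{\geq 1}$-groups between distinct summands vanish, so that the $A_\infty$-structure is formal and the quiver carries only the relations above. I would compute $K_i$ from the defining sequence~\eqref{equation:definition-Ki} (so $K_i$ sits in $0\to K_i\to \HHH^0(\GG,\Oscr_{L_i}(1))\otimes\Oscr_\GG\to\Oscr_{L_i}(1)\to 0$, and $\HHH^0(\GG,\Oscr_{L_i}(1))\cong V$), reducing each group to cohomology of $\Oscr_{L_i}(1)$, $\Rscr$, $\Rscr^\vee$ and their mutual $\uRHom$'s on $\GG$; the restrictions to $L_i\cong\PP^1$ from \cref{lemma:NLi-QLi} ($\Rscr|_{L_i}\cong\Oscr(-1)^{\oplus 2}$, $\Qscr|_{L_i}\cong\Oscr(1)^{\oplus 2}$) make these tractable via the standard short exact sequences. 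The main obstacle I anticipate is organizing the cross-terms $\Ext^\bullet(K_0,K_1)$ cleanly — here one genuinely uses that $L_0$ and $L_1$ are distinct (disjoint, as in~\eqref{equation:definition-L} for the rulings) and that $\phi_0,\phi_1$ are generic, so that the two rank-$3$ bundles $K_0,K_1$ have no morphisms or higher extensions between them; this is precisely where the word ``sufficiently generic'' does its work, and I would make it precise by noting that the locus of $(\phi_0,\phi_1)$ for which these vanishings hold is open and contains the point coming from the Segre quadric via~\eqref{equation:block-collection}, hence is nonempty.
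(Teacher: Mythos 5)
Your overall strategy is the one the paper follows: use \cref{lemma:factorization} to produce the two\dash dimensional arrow spaces, kill the backwards $\Hom$'s by reducing to (Serre\dash dual) cohomology on $L_i\cong\PP^1$, use disjointness of $L_0$ and $L_1$ to get $\Hom(K_0,K_1)=0$, and treat genericity as an open condition which is nonempty because of the Segre quadric. The computation of $\dim\Hom(\Rscr,\Oscr_\GG)=4$ from the dualized tautological sequence and the identification of the relations $b_ia_j=d_jc_i$ with the two factorizations of the canonical map $\Rscr\to\Oscr_\GG$ are correct, and more explicit than what the paper writes. However, there are two concrete errors. First, $\HHH^0(\GG,\Oscr_{L_i}(1))$ is \emph{not} isomorphic to $V$: the sheaf $\Oscr_{L_i}(1)$ is the pushforward of $\Oscr_{\PP^1}(1)$ from $L_i=\PP(U_0^i)$, so its space of global sections is $U_0^i$, which is $2$\dash dimensional. (This is forced by \cref{lemma:factorization}, where the kernel refines the factorization through $U_0^i\otimes\Oscr_\GG$, and by the rank count $2,2,2,1$ in \cref{theorem:exceptional-collection}.) With your $4$\dash dimensional middle term, $K_i$ would have rank $3$ and $\Hom(\Oscr_\GG,K_i)=\ker\bigl(V\to U_0^i\bigr)\neq 0$, producing backwards arrows and destroying the quiver. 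With the correct sequence $0\to K_i\to U_0^i\otimes\Oscr_\GG\to\Oscr_{L_i}(1)\to 0$ the induced map on global sections is the identity of $U_0^i$, whence $\Hom(\Oscr_\GG,K_i)=0$, as in the paper.

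Second, the $K_i$ are \emph{not} exceptional, and the higher $\Ext$'s between distinct summands do not all vanish. Since $\mathbf{R}\Gamma(K_i)=0$, the defining triangle gives $\Ext^1(K_i,K_i)\cong\Ext^2(\Oscr_{L_i}(1),K_i)\cong\Ext^1(\Oscr_{L_i}(1),\Oscr_{L_i}(1))\cong\HHH^0(L_i,\Nscr_{L_i}\GG)\cong k^9$, and similarly $\Ext^2(K_i,\Oscr_\GG)\cong\Ext^3(\Oscr_{L_i}(1),\Oscr_\GG)\neq 0$. This is precisely why the paper remarks, immediately after the proposition, that these four sheaves do not form an exceptional collection, and why $\GG$ has to be blown up before \cref{theorem:embedding}. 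Fortunately this does not sink the proposition: the Ext\dash quiver of the finite\dash dimensional algebra $Q_\square$ is determined by the $\Hom$\dash spaces and their compositions alone, so no formality or higher\dash $\Ext$ vanishing is required. But the verification step you propose would fail as stated and should be dropped, and $\End(K_i)\cong k$ must then be checked directly (it again follows from $\mathbf{R}\Gamma(K_i)=0$ together with $\Hom(\Oscr_{L_i}(1),\Oscr_{L_i}(1))\cong k$) rather than deduced from exceptionality of $K_i$.
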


\begin{proof}
  We first check that there are no Hom's going backwards. Applying $\Hom(-,\mathcal{R})$ to \eqref{equation:definition-Ki} we see that by exceptionality of the pair~$\langle\Rscr,\Oscr_\GG\rangle$ we need to prove that~$\Ext^1(\Oscr_{L_i}(1),\Rscr)=0$. This is the case by Serre duality:
  % e.g. for the exact statement of Serre duality used, see http://mathoverflow.net/questions/72674/ext-groups-and-serre-duality
  % for the adjunction:
  % 1) rewrite the RHS of the first line as Hom in D^b(X) between R\otimes\omega_G^\vee and O_{L_i}(1) shifted
  % 2) O_{L_i}(1)=j_*(O_{L_i}(1))=Rj_*(O_{L_i}(1)) because closed immersions are affine hence exact
  % 3) Lj^*\dashv Rj_*=j_*, but Lj^*=j^* on a vector bundle
  \begin{equation}
    \begin{aligned}
      \Ext_\GG^1(\Oscr_{L_i}(1),\Rscr)
      &\cong\Ext_\GG^3(\Rscr\otimes\omega_\GG^\vee,\Oscr_{L_i}(1))^\vee \\
      &\cong\Ext_{L_i}^3((\Rscr\otimes\omega_\GG^\vee)|_{L_i},\Oscr_{L_i}(1))^\vee \\
      &=0.
    \end{aligned}
  \end{equation}

  Applying~$\Hom(\Oscr_\GG,-)$ to \eqref{equation:definition-Ki} we get that~$\mathcal{K}_i$ indeed does not have global sections because we get the identity morphism between~$\Hom(\Oscr_\GG,\HHH^0(\GG,\Oscr_{L_i}(1))\otimes_k\Oscr_\GG)$ and~$\Hom(\Oscr_\GG,\Oscr_{L_i}(1))$.

  Now to each of the isomorphisms~$\phi_i$ we can apply~\cref{lemma:factorization}, and for a generic geometric square, the~$\PP(U_0^i)$ don't intersect in~$\GG$, hence~$\Hom(\mathcal{K}_i,\mathcal{K}_{1-i})=0$, and the algebra~$Q_\square$ does indeed have the form~\eqref{equation:block-collection}.
\end{proof}

These four coherent sheaves cannot be used to realize an admissible embedding~$\derived^\bounded(Q_{\square}) \hookrightarrow \derived^{\bounded}(\GG)$ since they do not form an exceptional collection. To ensure that they do, we need to blow up~$\GG$ in the two~$L_i$, mimicking the description in \cref{theorem:description-H}. Let us denote by~$E_i$ the corresponding exceptional divisors on~$\HH_{\square}\coloneqq\Bl_{L_0 \sqcup L_1} \GG$, so we have a cartesian square
\begin{equation}
  \label{equation:notation-square}
  \begin{tikzcd}[row sep = huge, column sep = large]
    E_{\square}=E_0\sqcup E_1 \arrow[hook]{r}{j=j_1\sqcup j_2} \arrow[two heads]{d}{q=q_1\sqcup q_2} & \mathbb{H}_{\square}=\Bl_{L_{0}\sqcup L_1}\GG \arrow[two heads]{d}{p} \\
    L_{\square}=L_0\sqcup L_1 \arrow[hook]{r}{i=i_1\sqcup i_2} & \GG
  \end{tikzcd}
\end{equation}
similar to \eqref{equation:notation-commutative}.

We are now ready to show how a generic geometric square gives rise to a strong exceptional collection of vector bundles. In \cref{theorem:embedding} we will describe the structure of this strong exceptional collection.

In the proof we will compute mutations of exceptional collections. If~$\langle E,F\rangle$ is an exceptional collection we will denote the left mutated collection as~$\langle \mathrm{L}_EF,E\rangle$. A special property of the exceptional collection in \eqref{equation:block-collection} is that it is a~3\dash block collection, and one can also mutate blocks, for which similar notation will be used.

\begin{theorem}
  \label{theorem:exceptional-collection}
  For a generic geometric square, there is a strong exceptional collection of vector bundles
  \begin{equation}
    \langle p^* \Rscr, \mathcal{C}_0, \mathcal{C}_1, \Oscr_{\HH_{\square}} \rangle
  \end{equation}
  of ranks~$2, 2, 2, 1$ on~$\HH_\square$, where
  \begin{equation}
    \mathcal{C}_i=\ker(\Oscr_{\HH_\square}^{\oplus 2} \twoheadrightarrow \Oscr_{E_i}(1,0)).
  \end{equation}
\end{theorem}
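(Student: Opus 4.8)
The plan is to build the strong exceptional collection on $\HH_\square$ by pulling back the four coherent sheaves $\Rscr, K_0, K_1, \Oscr_\GG$ on $\GG$ from \cref{proposition:relations} and correcting the two middle terms so that the backward $\Ext$'s introduced by the blow-up are killed. Concretely, I would first observe that $p^*K_i$ is not exceptional on $\HH_\square$ (the blow-up creates $\Ext$'s into the exceptional divisor), and that the right replacement is $C_i = \ker(\Oscr_{\HH_\square}^{\oplus 2}\twoheadrightarrow\Oscr_{E_i}(1,0))$. The first step is to identify $C_i$ more intrinsically: since $\Oscr_{E_i}(1,0)=q_i^*\Oscr_{L_i}(1)$ in the notation of \eqref{equation:notation-square}, and $p^*\Oscr_{L_i}(1)$ restricted to $E_i$ is $\Oscr_{E_i}(1,0)$, one expects a short exact sequence relating $C_i$ to $p^*K_i$ and a twist of $\Oscr_{E_i}$; I would make this comparison precise using the defining sequence \eqref{equation:definition-Ki} together with $\HHH^0(\GG,\Oscr_{L_i}(1))\cong V$ (via $\phi_i$, this is $U_0^i\otimes U_1^i$), so that $C_i$ fits in $0\to C_i\to \Oscr_{\HH_\square}^{\oplus 2}\to \Oscr_{E_i}(1,0)\to 0$ with the rank-$2$ free sheaf being (a factor of) $p^*$ applied to the middle term of \cref{lemma:factorization}.

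Next I would verify that $\langle p^*\Rscr, C_1, C_2, \Oscr_{\HH_\square}\rangle$ is an exceptional collection. For the diagonal entries, $p^*\Rscr$ and $\Oscr_{\HH_\square}$ are exceptional because $\mathbf{L}p^*$ is fully faithful on $\derived^\bounded(\GG)$ (\cref{theorem:blowup}) and $\Rscr,\Oscr_\GG$ are exceptional there; for $C_i$ one computes $\RHom(C_i,C_i)$ from the defining triangle, reducing to $\RHom$ among $\Oscr_{\HH_\square}$ and $\Oscr_{E_i}(1,0)$, where the relevant groups are controlled by $\RHom_{E_i}(\Oscr_{E_i}(1,0),\Oscr_{E_i}(1,0))=\RHom_{E_i}(\Oscr_{E_i},\Oscr_{E_i})=k$ and by $\mathbf{R}p_*\Oscr_{E_i}(-1,0)=0$ (no higher pushforward, and the fiberwise $\HHH^*(\PP^1\times\PP^2,\Oscr(-1,0))=0$). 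For the off-diagonal vanishing I would split into three cases. (a) $\RHom(\text{anything},p^*\Rscr)$ and $\RHom(C_i,p^*\Rscr)$, $\RHom(\Oscr_{\HH_\square},p^*\Rscr)$: using adjunction $\RHom_{\HH_\square}(p^*\Fscr,p^*\Rscr)=\RHom_\GG(\Fscr,\Rscr\otimes\mathbf{R}p_*\Oscr_{\HH_\square})=\RHom_\GG(\Fscr,\Rscr)$ since $\mathbf{R}p_*\Oscr_{\HH_\square}=\Oscr_\GG$, together with \cref{proposition:relations} (no backward Homs on $\GG$) and the analysis of the $\Oscr_{E_i}(1,0)$-contributions via $\mathbf{R}p_*$. (b) $\RHom(\Oscr_{\HH_\square},C_i)$ and $\RHom(C_i,C_j)$ for $i\neq j$: reduce along the defining triangle to $\RHom(\Oscr_{\HH_\square},\Oscr_{E_j}(1,0))$ and $\RHom(\Oscr_{E_i}(1,0),\Oscr_{E_j}(1,0))$; the latter vanishes for $i\neq j$ because $E_0\cap E_1=\emptyset$ (the $L_i$ are disjoint for a generic square), and the former is computed by $\mathbf{R}p_*$ as above. (c) $\RHom(p^*\Rscr,C_i)$ and $\RHom(p^*\Rscr,\Oscr_{\HH_\square})$: again via the triangle and projection formula, these come down to $\RHom_\GG(\Rscr,K_i)$, $\RHom_\GG(\Rscr,\Oscr_\GG)$ on $\GG$ (nonzero, as they should be, with $\dim\Hom(\Rscr,\Oscr_\GG)=4$ from \cref{proposition:relations}) plus terms $\RHom(p^*\Rscr,\Oscr_{E_i}(1,0))=\RHom_{L_i}(\Rscr|_{L_i},\mathbf{R}q_{i*}\Oscr_{E_i}(1,0))=\RHom_{L_i}(\Oscr_{\PP^1}(-1)^{\oplus2},\Oscr_{\PP^1}(1))$ using \cref{lemma:NLi-QLi}, which is concentrated in degree $0$ — hence no higher $\Ext$'s, giving strongness in this slot.

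The heart of the argument, and the step I expect to be the main obstacle, is the simultaneous vanishing of \emph{all} higher $\Ext$-groups — i.e. proving strongness, not just exceptionality. This requires carefully bookkeeping the cones and rotations: each $C_i$ is only defined up to the triangle with $\Oscr_{E_i}(1,0)$, and when one feeds two such triangles into $\RHom(C_i,C_j)$ the intermediate terms $\RHom(\Oscr_{E_i}(1,0),\Oscr_{\HH_\square})$ involve $\mathbf{R}p_*$ of a line bundle on the exceptional $\PP^1\times\PP^2$, whose cohomology is most conveniently handled via the relative Euler sequence and the projection $q_i$; I would organize this with a spectral sequence or an explicit long exact sequence and push everything down to cohomology computations on $\PP^1$ using \cref{lemma:NLi-QLi} and \cref{lemma:bundles-H-and-E}. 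A secondary subtlety is checking that $C_i$ is genuinely a \emph{vector bundle} of rank $2$ (not merely a perfect complex): this follows because the surjection $\Oscr_{\HH_\square}^{\oplus2}\twoheadrightarrow\Oscr_{E_i}(1,0)$ is a surjection of sheaves whose kernel is torsion-free, but one must confirm local freeness, e.g. by noting that $\Oscr_{E_i}(1,0)$ has homological dimension $1$ as an $\Oscr_{\HH_\square}$-module (it is the structure sheaf of a smooth divisor, twisted) so $C_i$ has no higher $\Tor$ and, being a second syzygy of rank $2$ over a smooth variety, is locally free. Finally, fullness is \emph{not} asserted in this statement (only that it is a strong exceptional \emph{collection}), so I would not attempt to prove generation here; that is deferred to \cref{theorem:embedding}.
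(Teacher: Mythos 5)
Your overall strategy is the paper's: define $C_i$ by the sequence $0\to C_i\to\Oscr_{\HH_\square}^{\oplus 2}\to\Oscr_{E_i}(1,0)\to 0$, check local freeness via the fact that $\Oscr_{E_i}(1,0)$ has a length-one locally free resolution, get the pair $\langle p^*\Rscr,\Oscr_{\HH_\square}\rangle$ from full faithfulness of $\mathbf{L}p^*$, and reduce every remaining $\Ext$-group to cohomology of line bundles on $E_i\cong\PP^1\times\PP^2$ (via \cref{lemma:NLi-QLi}, \cref{lemma:bundles-H-and-E} and Serre duality) together with the $\Hom$-data on $\GG$ from \cref{proposition:relations}. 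The only packaging difference is that the paper realises $C_i$ as the left mutation of $\Oscr_{E_i}(1,0)$ through $\Oscr_{\HH_\square}$ inside the block collection $\langle\Oscr_{\HH_\square},[\Oscr_{E_0}(1,0),\Oscr_{E_1}(1,0)]\rangle$, which makes the exceptionality of $\langle[C_0,C_1],\Oscr_{\HH_\square}\rangle$ (in particular $\RHom(\Oscr_{\HH_\square},C_i)=0$, i.e.\ that the defining surjection is the full evaluation map) automatic; you do these verifications by hand, which is fine but should be stated.

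There is one step where your justification, as written, does not suffice: the vanishing of $\Ext^1(p^*\Rscr,C_i)$. Applying $\Hom(p^*\Rscr,-)$ to the defining sequence gives
\begin{equation}
  0\to\Hom(p^*\Rscr,C_i)\to\Hom(p^*\Rscr,\Oscr_{\HH_\square}^{\oplus 2})\to\Hom(p^*\Rscr,\Oscr_{E_i}(1,0))\to\Ext^1(p^*\Rscr,C_i)\to 0,
\end{equation}
so the concentration of $\RHom(p^*\Rscr,\Oscr_{E_i}(1,0))$ in degree $0$ only kills $\Ext^{\geq 2}(p^*\Rscr,C_i)$; the statement ``concentrated in degree $0$, hence no higher $\Ext$'s'' is a non sequitur for $\Ext^1$, which is the cokernel of a map $k^{8}\to k^{6}$. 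To close this you need that map to be surjective, equivalently that its kernel $\Hom(p^*\Rscr,C_i)\cong\Hom_\GG(\Rscr,\mathbf{R}p_*C_i)\cong\Hom_\GG(\Rscr,K_i)$ is exactly $2$-dimensional, which is what \cref{proposition:relations} provides ($8-2=6$). You cite $\RHom_\GG(\Rscr,K_i)$ in passing but never perform this count, and nothing in \cref{proposition:relations} asserts vanishing of higher $\Ext$'s on $\GG$ directly, so the dimension bookkeeping is genuinely needed. (A cosmetic remark: your ``$\mathbf{R}p_*\Oscr_{E_i}(-1,0)=0$'' is false as stated --- it equals $i_*\Oscr_{\PP^1}(-1)$ --- though the acyclicity you actually need, $\HHH^\bullet(\PP^1\times\PP^2,\Oscr(-1,1))=0$ for the self-$\Ext$ computation, does hold.)
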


\begin{proof}
  The first and last object are clearly vector bundles. For the middle objects, this can be checked in the fibers by tensoring the defining short exact sequence of~$\mathcal{C}_i$ with the residue field in a point, and using that~$\mathcal{O}_{E_i}(1,0)$ is the pushforward of a line bundle on~$E_i$, hence locally has the divisor short exact sequence as a flat resolution.

  The derived pullback~$\LLL p^*$ is fully faithful, and~$\LLL p^*=p^*$ when applied to vector bundles. Since~$\langle \Rscr, \Oscr_{\GG} \rangle$ is a strong exceptional pair by~\cref{theorem:decomposition-G}, so is~$\langle p^*\Rscr, \Oscr_{\HH_{\square}} \rangle$.

  We first check that~$\langle E,[F,G] \rangle=\langle \Oscr_{\HH_\square}, [\Oscr_{E_0}(1,0), \Oscr_{E_1}(1,0)] \rangle$ is a strong (block) exceptional collection. The sheaves~$\Oscr_{E_i}(1,0)$ are exceptional by the fully faithfulness of \eqref{equation:orlov-blowup-fully-faithful}; moreover
  \begin{equation}
    \begin{aligned}
      \Hom(\Oscr_{E_i}(1,0),\Oscr_{\HH_\square}[k])
      &= \Hom(\Oscr_{\HH_\square},\Oscr_{E_i}(1,0) \otimes \omega_{\HH_\square}[4-k])^\vee \\
      &= \HHH^{4-k}(\PP^1 \times \PP^2,\Oscr(-3,-2))^\vee \\
      &= 0,
    \end{aligned}
  \end{equation}
  where we used~\eqref{equation:restriction-canonical} in the second equality. Also,
  \begin{equation}
    \Hom(\Oscr_{\HH_\square},\Oscr_{E_i}(1,0)[k])=\HHH^k(\PP^1 \times\PP^2,\Oscr(1,0)),
  \end{equation}
  and~$\Oscr_{E_0}(1,0), \Oscr_{E_1}(1,0)$ are orthogonal because they have disjoint support, so~$\langle E,[F,G] \rangle$ is indeed a strong (block) exceptional collection. Hence the mutated collection
  \begin{equation}
    \langle [\mathrm{L}_E(F), \mathrm{L}_E(G)], E \rangle=\langle [\mathcal{C}_0, \mathcal{C}_1],\Oscr_{\HH_\square}\rangle
  \end{equation}
  is also exceptional. By applying~$\Hom(-,\Oscr_{\HH_\square})$ to the defining short exact sequence for~$\mathcal{C}_i$
  \begin{equation}
    \label{equation:definition-Ci}
    0 \to \mathcal{C}_i \to \Oscr^{\oplus 2}_{\HH_\square} \to \Oscr_{E_i}(1,0) \to 0,
  \end{equation}
  obtained from the mutation and using that~$\langle \Oscr_{\HH_\square}, \Oscr_{E_i}(1,0) \rangle$ is strong exceptional, we see that~$\langle [\mathcal{C}_0, \mathcal{C}_1], \Oscr_{\HH_\square} \rangle$ is a strong exceptional collection.

  It remains to check that~$\langle p^* \Rscr, [\mathcal{C}_0, \mathcal{C}_1] \rangle$ is a strong exceptional collection. We first check strongness: applying~$\Hom(p^*\Rscr,-)$ to \eqref{equation:definition-Ci} and using that~$\langle p^*(\Rscr), \Oscr_{\HH_{\square}} \rangle$ is exceptional, we find an exact sequence
  \begin{equation}
    0 \to \Hom(p^*\Rscr,\mathcal{C}_i) \to \Hom(p^*\Rscr, \Oscr_{\HH_\square}^{\oplus 2}) \to \Hom(p^*\Rscr,\Oscr_{E_i}(1,0))\to \Ext^1(p^*\Rscr,\mathcal{C}_i)\to 0,
  \end{equation}
  and
  \begin{equation}
    \Ext^{m+1}(p^* \Rscr, \mathcal{C}_i) \cong \Ext^{m}(p^*\Rscr,\Oscr_{E_i}(1,0)),
  \end{equation}
  for all~$m \geq 1$. Now
  \begin{equation}
    \Ext^{m}(p^*\Rscr,\Oscr_{E_i}(1,0)) \cong \HHH^m(\PP^1 \times \PP^2,\Oscr(2,0)^{\oplus 2}),
  \end{equation}
  which is zero for~$m\geq 1$. Also
  \begin{equation}
    \begin{aligned}
      \dim \Hom(p^*\Rscr, \Oscr_{\HH_\square}^{\oplus 2})&=8, \\
      \dim \Hom(p^*\Rscr,\Oscr_{E_i}(1,0)) &= 6,
    \end{aligned}
  \end{equation}
  so it suffices to note that
  \begin{equation}
    \begin{aligned}
      \Hom(p^*\Rscr,\mathcal{C}_i)
      &\cong\Hom_\GG(\Rscr,\mathbf{R}p_*\mathcal{C}_i) \\
      &\cong \Hom_\GG(\Rscr,\mathcal{K}_i),
    \end{aligned}
  \end{equation}
  which is~2\dash dimensional by~\cref{proposition:relations}. Finally we check exceptionality: again one can apply~$\Hom(-,p^*\Rscr)$ to~\eqref{equation:definition-Ci} to see that
  \begin{equation}
    \Ext^m(\mathcal{C}_i,p^*\Rscr) \cong \Ext^{m+1}(\Oscr_{E_i}(1,0),p^*\Rscr),
  \end{equation}
  and this last group can be calculated using Serre duality,~\cref{lemma:bundles-H-and-E} and~\cref{lemma:NLi-QLi} as follows:
  \begin{equation}
    \begin{aligned}
      \Hom(\Oscr_{E_i}(1,0),p^*\Rscr[k+1]) &\cong \Hom(p^*\Rscr, \Oscr_{E_i}(1,0)\otimes \omega_{\HH_\square}[4-k-1])^{\vee} \\
      &\cong \Hom(p^* \Rscr, \Oscr_{E_i}(-3,-2)[4-k-1])^{\vee} \\
      &\cong \HHH^{4-k-1}(\PP^1 \times \PP^2, \Oscr(-2,-2)^{\oplus 2})^{\vee},
    \end{aligned}
  \end{equation}
  which is easily seen to be zero for all~$k$.
\end{proof}

\begin{theorem}
  \label{theorem:embedding}
  For a generic geometric square~$\square$, there is an admissible embedding
  \begin{equation}
    \label{equation:main-embedding}
    \derived^{\bounded}(Q_\square) \hookrightarrow \derived^\bounded(\HH_\square),
  \end{equation}
  where~$Q_\square$ is the endomorphism algebra as in \eqref{equation:endomorphism-algebra}, and~$\HH_\square$ is a deformation of~$\HH$.
\end{theorem}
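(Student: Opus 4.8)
The plan is to produce the embedding in two stages: first an admissible embedding $\derived^\bounded(Q_\square)\hookrightarrow\derived^\bounded(\HH_\square)$ for a \emph{fixed} generic geometric square coming from the commutative quadric, and then a deformation argument showing that both sides deform compatibly. For the first stage, the key input is \cref{theorem:exceptional-collection}: the strong exceptional collection $\langle p^*\Rscr, C_1, C_2, \Oscr_{\HH_\square}\rangle$ of vector bundles on $\HH_\square$ gives a tilting-type object $T\coloneqq p^*\Rscr\oplus C_1\oplus C_2\oplus\Oscr_{\HH_\square}$, and I would show that $\End_{\HH_\square}(T)\cong Q_\square$ as algebras, with $\derived^\bounded(Q_\square)$ realised as the admissible subcategory $\langle p^*\Rscr, C_1, C_2, \Oscr_{\HH_\square}\rangle\subseteq\derived^\bounded(\HH_\square)$ generated by these bundles (admissibility of the subcategory generated by an exceptional collection in a smooth projective variety is standard, e.g.\ via the left/right adjoints of the inclusion built from the dual collection). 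The identification of the endomorphism algebra with $Q_\square$ — including the relations $b_ia_j=d_jc_i$ — follows by combining \cref{proposition:relations}, which computes $\End_\GG(\Rscr\oplus K_0\oplus K_1\oplus\Oscr_\GG)$ and its quiver, with the adjunctions $\Hom(p^*\Rscr,C_i)\cong\Hom_\GG(\Rscr,K_i)$ and $\Hom(p^*\Rscr,\Oscr_{\HH_\square})\cong\Hom_\GG(\Rscr,\Oscr_\GG)$ established in the proof of \cref{theorem:exceptional-collection}; the compatibility of the relations is forced because mutation is an equivalence and the block structure of \eqref{equation:block-collection} is preserved.

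For the second stage I would invoke the deformation theory of abelian categories \cite{MR2183254,MR2238922,MR3050709}. A noncommutative quadric is $\qgr A$ for a cubic Artin--Schelter regular $A$, and its derived category carries the $3$-block exceptional collection whose endomorphism algebra is exactly $Q_\square$ for the geometric square extracted from $A$ in \cref{subsection:noncommutative-quadrics}; thus a generic noncommutative quadric \emph{is} a generic geometric square in the sense used here, and $\derived^\bounded(\qgr A)\simeq\derived^\bounded(Q_\square)$. On the geometric side, the family of geometric squares is unobstructed (the relevant Ext-groups computed in \cref{theorem:exceptional-collection} and \cref{proposition:relations} vanish in the right degrees), so the strong exceptional collection $\langle p^*\Rscr, C_1, C_2, \Oscr_{\HH_\square}\rangle$ deforms flatly over the base of geometric squares, and the total space of the associated relative blow-up $\HH_\square=\Bl_{L_0\sqcup L_1}\GG$ is a flat deformation of $\HH=\Hilb^2\quadric$ (here one uses that the $L_i$ vary in $\GG$ as $\phi_i$ varies, but $\GG$ itself is rigid). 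Since the collection stays strong and full-on-its-subcategory throughout the family, the admissible embedding of the first stage persists, yielding \eqref{equation:main-embedding}.

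I expect the main obstacle to be the \emph{matching of relations} in the identification $\End_{\HH_\square}(T)\cong Q_\square$: one must check not just that the quiver of $T$ is \eqref{equation:block-collection} with $\dim\Hom(p^*\Rscr,\Oscr_{\HH_\square})=4$ (which \cref{proposition:relations} and the adjunction give directly), but that the composition maps $\Hom(p^*\Rscr,C_i)\otimes\Hom(C_i,\Oscr_{\HH_\square})\to\Hom(p^*\Rscr,\Oscr_{\HH_\square})$ satisfy precisely the relations $b_ia_j=d_jc_i$ defining $Q_\square$. The cleanest route is to transport everything through the mutation used to build $C_i$ and through the adjunction $\mathbf{R}p_*$, so that the composition on $\HH_\square$ is computed as the corresponding composition on $\GG$ inside $\End_\GG(\Rscr\oplus K_0\oplus K_1\oplus\Oscr_\GG)$, where the relations can be read off from \cref{lemma:factorization}; once the commutative case is pinned down, genericity and flatness of the deformation do the rest. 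A secondary point requiring care is that "deformation of $\HH$" is meant in the sense of a flat family over an Artinian or formal base, and that the ample sequence / strongly ample hypotheses of \cite{MR3050709} apply so that infinitesimal deformations integrate; I would state this explicitly rather than prove it from scratch.
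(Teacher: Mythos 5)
Your stage one is essentially the paper's entire proof, and your stage two is unnecessary and not rigorous as written. \cref{theorem:exceptional-collection} and \cref{proposition:relations} are stated and proved for an \emph{arbitrary} generic geometric square, not just the one coming from the commutative quadric, so the construction of the strong exceptional collection $\langle p^*\Rscr,C_1,C_2,\Oscr_{\HH_\square}\rangle$ and of the admissible subcategory it generates is uniform in $\square$; there is nothing to deform. Your appeal to unobstructedness (``the relevant Ext-groups vanish in the right degrees'') and to the formal deformation theory of abelian categories via \cite{MR3050709} is out of place here: the parameter space of geometric squares is a Zariski-open subset of a linear-algebraic variety, the fact that $\HH_\square=\Bl_{L_0\sqcup L_1}\GG$ varies in a smooth projective family containing $\HH$ is elementary (and is only recorded later, in \cref{theorem:noncommutative-embedding}), and the claim that ``the collection stays strong throughout the family'' is precisely the content of \cref{theorem:exceptional-collection}, not something to be deduced by a persistence argument from the commutative fibre. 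Likewise, the passage from noncommutative quadrics to geometric squares belongs to \cref{proposition:squaremutate} and \cref{proposition:linear-to-block}, not to this theorem.

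On the one genuinely non-formal point, which you correctly single out: the identification $\End_{\HH_\square}(p^*\Rscr\oplus C_1\oplus C_2\oplus\Oscr_{\HH_\square})\cong Q_\square$. Note first that $Q_\square$ is \emph{defined} in \eqref{equation:endomorphism-algebra} as $\End_\GG(\Rscr\oplus K_0\oplus K_1\oplus\Oscr_\GG)$, so there are no abstract relations $b_ia_j=d_jc_i$ to verify at this stage; what is needed is an algebra isomorphism between the two endomorphism rings. The paper's mechanism, which your ``transport through $\mathbf{R}p_*$'' gestures at but does not pin down, is to apply $\mathbf{L}p^*$ to \eqref{equation:definition-Ki} and observe that $C_i\cong p^*(K_i)/\mathrm{L}^1p^*(\Oscr_{L_i}(1))$, i.e.\ $C_i$ is the quotient of $p^*(K_i)$ by its torsion subsheaf. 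Hence pullback followed by the torsion quotient induces an algebra homomorphism from $Q_\square$ to the endomorphism algebra on $\HH_\square$; it is injective because the sheaves $\Rscr,K_0,K_1,\Oscr_\GG$ are torsion-free (an element acting by zero upstairs would act by zero downstairs), and it is then bijective by the Hom-space computations already carried out in \cref{theorem:exceptional-collection} and \cref{proposition:relations}. Without this torsion-quotient description your argument for the algebra isomorphism is incomplete.
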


\begin{proof}
  By~\cref{theorem:exceptional-collection}, there is an admissible embedding
  \begin{equation}
    \derived^{\bounded}(\End(p^* \Rscr \oplus \mathcal{C}_0 \oplus \mathcal{C}_1 \oplus \Oscr_{\HH_{\square}})) \hookrightarrow \derived^\bounded(\HH_\square).
  \end{equation}
  Because~$i$ is a closed immersion we get the exact sequence
  \begin{equation}
    0\to\mathrm{L}^1p^*(\Oscr_{L_i}(1))\to p^*(\mathcal{K}_i)\to\Oscr_\HH^2\to\Oscr_{E_i}(1,0)\to 0
  \end{equation}
  after applying~$\mathbf{L}p^*$ to \eqref{equation:definition-Ki}  and hence by quotienting out the torsion in~$p^*(\mathcal{K}_i)$ we obtain an isomorphism
  \begin{equation}
    \mathcal{C}_i\cong p^*(\mathcal{K}_i)/\mathrm{L}^1p^*(\Oscr_{L_i}(1)).
  \end{equation}

  The last thing to observe is that the action of~$Q_\square$ remains faithful, which gives us the isomorphism
  \begin{equation}
    \End_\HH(p^* \Rscr \oplus \mathcal{C}_0 \oplus \mathcal{C}_1 \oplus \Oscr_{\HH_{\square}})\cong Q_{\square}
  \end{equation}
  and the admissible embedding \eqref{equation:main-embedding}.

  To see this, it suffices to realise that the action of~$Q_\square$ generically does not change under taking the quotient with the torsion subsheaf. So if an element of~$Q_\square$ were to act as zero on the exceptional collection on~$\HH$, it would also act as zero on the original collection of sheaves on~$\GG$ because all sheaves are torsionfree, arriving at a contradiction.
  % element of Q_\square gives global section of sheaf End
  % sheaf End is again torsionfree (we feed it coherent torsionfree sheaves)
  % moreover on H\setminus E it is locally free
  % so in the stalks at these points it acts as a nonzero element of a regular local ring on a direct sum of frees
\end{proof}

\subsection{Noncommutative quadrics}
\label{subsection:noncommutative-quadrics}

We will now recall the necessary definitions and some properties of noncommutative quadrics, all of which are proven in~\cite{MR2836401}. Then we will explain how a generic noncommutative quadric gives rise to a geometric square, such that we can prove the embedding result in \cref{theorem:noncommutative-embedding}.

A~$\ZZ$\dash algebra is a pre-additive category with objects indexed by~$\ZZ$, generalising the theory of graded algebras and modules. All usual notions like right and left modules, bimodules, ideals, etc.\ make sense in this context and we will freely make use of them. For more details, consult \cite[\S2]{MR2836401}.

Let~$\Gr A$ denote the category of right~$A$-modules, and (if~$A$ is noetherian)~$\gr A$ the full subcategory of noetherian objects. Also~$\QGr A$ (respectively~$\qgr A$) is the quotient of~$\Gr A$ (respectively~$\gr A$) by the torsion modules. The quotient functor is denoted~$\pi\colon\gr A\to\qgr A$.

We write~$A_{i,j}=\Hom_A(j,i)$, and~$e_i=i\xrightarrow{\id}i$, for~$i,j \in \ZZ$. Then~$P_i=e_iA$ are projective generators for~$\Gr A$ and if~$A$ is connected,~$S_i$ will be the unique simple quotient of~$P_i$.

\begin{definition}
  A~$\ZZ$\dash algebra~$A$ is \emph{Artin--Schelter regular} if
  \begin{enumerate}
    \item $A$ is connected,
    \item $\dim A_{i,j}$ is bounded by a polynomial in~$j-i$,
    \item the projective dimension of~$S_i$ is finite, and uniformly bounded,
    \item $\sum_{j,k\in\mathbb{Z}} \dim \Ext^j_{\Gr A}(S_k,P_i)=1$, for every~$i$.
  \end{enumerate}
  If moreover the minimal resolution of~$S_i$ has the form
  \begin{equation}
    0 \to P_{i+4} \to P_{i+3}^{\oplus 2} \to P_{i+1}^{\oplus 2} \to P_i \to S_i \to 0.
  \end{equation}
  for all~$i$, then it is a \emph{three-dimensional cubic Artin--Schelter regular $\mathbb{Z}$-algebra}.
\end{definition}
Using this definition, we can now define noncommutative quadrics.
\begin{definition}
  A \emph{noncommutative quadric} is a category of the form~$\QGr A$, where~$A$ is a three-dimensional cubic Artin--Schelter regular~$\mathbb{Z}$-algebra.
\end{definition}

An important subclass of the cubic Artin--Schelter regular~$\ZZ$\dash algebras is given by the~$\ZZ$\dash algebra associated to a cubic Artin--Schelter regular graded algebra \cite{MR917738}. In general one gets a~$\ZZ$\dash algebra~$\check{B}$ from a~$\ZZ$\dash graded algebra by setting
\begin{equation}
  \check{B}_{i,j}\coloneqq B_{j-i}.
\end{equation}
The~$\mathbb{Z}$\dash algebras obtained in this way are called~1\dash periodic.

The motivation for this definition comes from the following theorem. For details and unexplained terminology we refer to~\cite{MR2836401,MR3050709}.
\begin{theorem}\cite[Theorem~1.5]{MR2836401}
  Let~$(R,\mathfrak{m})$ be a complete commutative Noetherian local ring with~$k=R/\mathfrak{m}$. Any~$R$\dash deformation of the abelian category $\coh\quadric$ is of the form~$\qgr\Ascr$, where~$\Ascr$ is an~$R$\dash family of three-dimensional cubic Artin-Schelter regular~$\ZZ$\dash algebras.
\end{theorem}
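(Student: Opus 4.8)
\emph{Proof strategy.} This is \cite[theorem~1.5]{MR2836401}; we sketch the plan. The starting point is that $\coh\quadric$ is itself of the claimed form: extending the exceptional collection \eqref{equation:block-collection} to a strongly ample sequence of line bundles $\Oscr(n)$, $n\in\ZZ$ (twisting alternately with $\Oscr_Q(1,0)$ and $\Oscr_Q(0,1)$), the associated $\ZZ$\dash algebra $B$ with $B_{i,j}=\Hom_{\quadric}(\Oscr(-j),\Oscr(-i))$ is three-dimensional cubic Artin--Schelter regular, and $\coh\quadric\cong\qgr B$ by the $\ZZ$\dash algebra analogue of Serre's theorem. Given an $R$\dash deformation $\Cscr$ of $\coh\quadric$, the plan is to: (i) lift this sequence to $\Cscr$; (ii) assemble the associated $\ZZ$\dash algebra $\Ascr$ over $R$; (iii) check that $\Ascr$ is an $R$\dash family of cubic Artin--Schelter regular $\ZZ$\dash algebras; and (iv) identify $\Cscr\cong\qgr\Ascr$.

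For (i)--(ii) I would invoke the deformation theory of abelian categories \cite{MR2183254,MR2238922}. Each $\Oscr(n)$ is a line bundle, so $\Ext^i_{\quadric}(\Oscr(n),\Oscr(n))\cong\HHH^i(\quadric,\Oscr_{\quadric})=0$ for $i\geq 1$; hence $\Oscr(n)$ lifts --- uniquely up to (non-unique) isomorphism, since $\Ext^1$ vanishes as well --- along every square-zero extension, in particular along each $R/\mathfrak{m}^{n+1}$, and, using completeness of $R$ together with the mechanism of \cite{MR3050709} that promotes infinitesimal deformations to formal ones in the presence of a strongly ample sequence, to an object $\widetilde{\Oscr}(n)$ of $\Cscr$. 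Flatness and base change for the vanishing of the higher $\Ext$'s then make $\Ascr_{i,j}\coloneqq\Hom_{\Cscr}(\widetilde{\Oscr}(-j),\widetilde{\Oscr}(-i))$ finitely generated free $R$\dash modules of the expected rank with $\Ascr\otimes_R k\cong B$, and composition in $\Cscr$ supplies the $\ZZ$\dash algebra structure.

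For (iii), connectedness, the polynomial growth bound, and finiteness of the projective dimension of the simples descend from the special fibre to $\Ascr$ by $R$\dash flatness, while the shape of the minimal resolution of the simple $S_i$ is obtained by lifting the minimal $R$\dash free resolution term by term and observing, via Nakayama, that it reduces modulo $\mathfrak{m}$ to the minimal resolution over $B$, which has the required cubic form $0\to P_{i+4}\to P_{i+3}^{\oplus 2}\to P_{i+1}^{\oplus 2}\to P_i\to S_i\to 0$. For (iv), one checks that $\widetilde{\Oscr}(n)$ is again a strongly ample sequence in $\Cscr$ --- the cohomological vanishing and generation conditions hold modulo $\mathfrak{m}$ and lift by Nakayama and completeness --- and invokes the $\ZZ$\dash algebra analogue of Serre's theorem once more to conclude $\Cscr\cong\qgr\Ascr$.

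The hard part is (iv), and it is precisely where the strongly ample sequence is indispensable: one must control cohomological vanishing uniformly along the whole deformation so that ampleness survives, and organise the inverse limit over the tower $\{R/\mathfrak{m}^{n+1}\}_n$ so that the formal objects $\widetilde{\Oscr}(n)$, and hence $\Ascr$, genuinely exist over $R$ and not merely over its infinitesimal quotients; both issues are settled by the results of \cite{MR3050709}.
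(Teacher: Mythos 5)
The paper offers no proof of this statement --- it is quoted verbatim from \cite[theorem~1.5]{MR2836401} --- so there is nothing internal to compare against. Your sketch faithfully reconstructs the strategy of the cited source: lift the strongly ample sequence of alternating twists using the vanishing of the higher self-$\Ext$'s of line bundles on the quadric, pass from infinitesimal to formal deformations over the complete local ring via \cite{MR3050709}, assemble the $\ZZ$\dash algebra from the lifted Hom's, and recover the deformed category by the $\ZZ$\dash algebra version of Serre's theorem; this is essentially the same approach as the actual proof.
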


One of the main results of \cite{MR2836401} is the classification of cubic Artin--Schelter regular~$\ZZ$\dash algebras in terms of linear algebra data. We will now recall this description for use in \cref{proposition:squaremutate}.

A three-dimensional cubic AS-regular algebra satisfies~$A_{i,i+n}=0$ for~$n<0$.~It is generated by the~$V_i=A_{i,i+1}$ and the relations are generated by the
\begin{equation}
  R_i=\ker(V_i \otimes_k V_{i+1} \otimes_k V_{i+2} \to A_{i,i+3}),
\end{equation}
which are of dimension two. Denote by
\begin{equation}
  W_i=V_i \otimes_k R_{i+1} \cap R_i \otimes_k V_{i+3} \subset V_i \otimes_k V_{i+1} \otimes_k V_{i+2} \otimes_k V_{i+3},
\end{equation}
which are of dimension one. Any non-zero element of~$W_i$ is a rank two tensor, both as an element of~$V_i \otimes_k R_{i+1}$ and as an element of~$R_i \otimes_k V_{i+3}$. Finally,~$A$ is determined up to isomorphism by its truncation~$\bigoplus_{i,j=0}^3 A_{ij}$, which motivates the following definition.
\begin{definition}
  A quintuple~$(V_0, V_1, V_2, V_3,W)$, where the~$V_i$ are two-dimensional vector spaces and~$0 \neq W=kw \subset V_0 \otimes_k V_1 \otimes_k V_2 \otimes_k V_3$ is called \emph{geometric} if for all~$j \in \{0,1,2,3\}$, and for all~$0 \neq \phi_j \in V_j^\vee$, the tensor
  \begin{equation}
    \langle \phi_j \otimes_k \phi_{j+1},w \rangle
  \end{equation}
  is non-zero, where indices are taken modulo four.
\end{definition}
In the sequel we will sometimes identify a quintuple by a non-zero element of~$W$, and we will omit the tensor product.

From the previous discussion, it is clear how to associate a quintuple to a noncommutative quadric. In fact, this quintuple is geometric and there is the following classification theorem that tells us that it suffices to consider geometric quintuples.

\begin{theorem}\cite[Theorem~4.31]{MR2836401}
  There is an isomorphism preserving bijection between noncommutative quadrics and geometric quintuples.
\end{theorem}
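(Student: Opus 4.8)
The statement to prove is the bijection between noncommutative quadrics (equivalently, their associated $\ZZ$-algebras) and geometric quintuples $(V_0,V_1,V_2,V_3,W)$. The proof naturally splits into three parts: (1) every three-dimensional cubic AS-regular $\ZZ$-algebra gives rise to a quintuple, and this quintuple is in fact geometric; (2) conversely, every geometric quintuple can be used to reconstruct a $\ZZ$-algebra of the required type; (3) these two constructions are mutually inverse up to isomorphism. The first direction is essentially contained in the discussion preceding the theorem: from the relation spaces $R_i = \ker(V_i\otimes V_{i+1}\otimes V_{i+2}\to A_{i,i+3})$ (two-dimensional by the defining resolution) one forms $W_i = V_i\otimes R_{i+1}\cap R_i\otimes V_{i+3}$, shows $\dim W_i = 1$, and then checks that a generator $w_i$ of $W_i$ is ``geometric'' in the sense defined — i.e. the partial contractions $\langle \phi_j\otimes\phi_{j+1},w\rangle$ are nonzero. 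The periodicity built into the structure of AS-regular $\ZZ$-algebras (the minimal resolution has the same shape at every vertex) is what lets one package the infinitely many $W_i$ into a single quintuple-with-shift; concretely one shows $W_{i+1}$ is determined by $W_i$ so that the data at $i=0$ suffices.

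**The reconstruction direction.** Given a geometric quintuple $(V_0,\dots,V_3,W)$, extend it periodically by setting $V_{i+4}=V_i$ and propagating $W$; define $R_i\subset V_i\otimes V_{i+1}\otimes V_{i+2}$ to be the two-dimensional space cut out appropriately from $W_i := $ (the image of $W$ under the periodic identification), using that a generator $w$ of $W_i$, viewed in $V_i\otimes R_{i+1}$ and in $R_i\otimes V_{i+3}$, is a rank-two tensor — this is precisely where the geometricity hypothesis is used, since the nondegeneracy of all the contractions $\langle\phi_j\otimes\phi_{j+1},w\rangle$ guarantees the relevant tensors have full rank two and hence that the $R_i$ so defined are genuinely two-dimensional and ``compatible'' along the chain. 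One then defines $A_{i,j}$ as the quotient of $V_i\otimes\cdots\otimes V_{j-1}$ by the ideal generated by the $R_k$, and must verify that the resulting $\ZZ$-algebra satisfies all five axioms of a three-dimensional cubic AS-regular algebra: connectedness and the polynomial growth bound are straightforward from the construction; the real content is constructing the minimal resolution $0\to P_{i+4}\to P_{i+3}^{\oplus2}\to P_{i+1}^{\oplus2}\to P_i\to S_i\to 0$ and checking its exactness, which amounts to a Koszul-type complex computation where exactness at each spot is equivalent to a rank condition that again reduces to geometricity of $w$.

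**Mutual inverseness.** Finally one checks the two assignments compose to the identity in both directions. Passing quintuple $\to$ algebra $\to$ quintuple: the relation space recovered from the constructed algebra is, by construction, the $R_i$ we put in, and the intersection $V_i\otimes R_{i+1}\cap R_i\otimes V_{i+3}$ recovers $W$ — here one uses the uniqueness (one-dimensionality) of $W_i$, which is part of what was established, so there is no room for the line to move. Passing algebra $\to$ quintuple $\to$ algebra: since $A$ is determined up to isomorphism by its truncation $\bigoplus_{i,j=0}^3 A_{ij}$ (stated in the text), and the quintuple records exactly the data $(V_i)$ together with the line $W$ cutting out the relations, the reconstructed algebra has the same truncation and hence is isomorphic to $A$; one should be slightly careful that ``isomorphism-preserving'' means an isomorphism of quintuples (compatible linear isomorphisms $V_i\to V_i'$ carrying $w$ to a scalar multiple of $w'$) corresponds to an isomorphism of $\ZZ$-algebras, which is essentially formal once the constructions are natural.

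**Main obstacle.** The genuinely hard step is the reconstruction direction — specifically, proving that the $\ZZ$-algebra built from a geometric quintuple actually satisfies the AS-regularity axioms, i.e. that the candidate minimal resolution of each $S_i$ is exact. This is a linear-algebra computation about a Koszul-like complex whose exactness is controlled by the rank-two condition on $w$; the geometricity hypothesis is exactly the hypothesis that makes every relevant contraction nondegenerate, so the proof must trace carefully how nondegeneracy of $\langle\phi_j\otimes\phi_{j+1},w\rangle$ for all $j$ translates into exactness at each term. (This is the analogue, in the cubic case, of the classical Artin--Tate--Van den Bergh analysis of quadratic AS-regular algebras of dimension three via elliptic curves / the point scheme, and one would expect to follow that template closely; in fact for cubic AS-regular algebras the relevant geometric datum is a divisor of bidegree $(2,2)$ on $\PP^1\times\PP^1$, which is where the quadric enters.) Since the theorem is quoted from \cite{MR2836401}, I would present the above as the structure of the argument and refer there for the detailed verification of exactness.
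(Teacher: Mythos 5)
The paper does not prove this statement at all: it is imported verbatim from Van den Bergh \cite{MR2836401} (Theorem~4.31) and used as a black box, so there is no in-paper argument to compare yours against. Your outline is a reasonable reconstruction of the overall shape of the cited proof --- extract $(V_0,\dots,V_3,W)$ from the relation spaces, reconstruct the $\ZZ$\dash algebra from a geometric quintuple, check the two assignments are mutually inverse --- and you correctly locate where geometricity must enter.

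As a proof, however, your text has a genuine gap exactly where you flag the ``main obstacle'': the equivalence \emph{quintuple geometric $\Leftrightarrow$ reconstructed algebra is AS-regular} is asserted rather than proven, and that equivalence is the entire content of the theorem. Two points need more care than ``a Koszul-type rank computation.'' First, the reconstruction direction must produce $V_i$ and $R_i$ for \emph{all} $i\in\ZZ$ from data indexed by $\{0,1,2,3\}$; writing ``extend periodically by $V_{i+4}=V_i$'' presupposes canonical isomorphisms $V_{i+4}\cong V_i$, which for a general (non-$1$-periodic) $\ZZ$\dash algebra are themselves a consequence of the shape of the minimal resolution (the rank-one tail $P_{i+4}$) and have to be established, not decreed. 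Second, in the cited reference the exactness of the candidate resolution is not settled by a direct rank count on $w$: it goes through the Hilbert-series analysis of standard $\ZZ$\dash algebras and (truncated) point modules, with the point scheme a divisor of bidegree $(2,2)$ in $\PP^1\times\PP^1$, and the converse direction (regularity forces geometricity of the extracted quintuple) also requires an argument. Since the theorem is only quoted in this paper, deferring to \cite{MR2836401} is legitimate --- but then the proof is the citation, and the sketch should be presented as a roadmap of that reference rather than as a proof.
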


By construction a noncommutative quadric has a full strong exceptional collection
\begin{equation}
  \begin{tikzcd}
    \pi(P_3) \arrow{r} \arrow[shift left]{r}{V_2} & \pi(P_2) \arrow{r} \arrow[shift left]{r}{V_1} & \pi(P_1) \arrow{r} \arrow[shift left]{r}{V_0} & \pi(P_0)
  \end{tikzcd}
\end{equation}
with relations~$R=W\otimes_k V_3^\vee$. We will use the (purely formal) notation
\begin{equation}
  \label{equation:sequence-collection-qgr}
  \begin{tikzcd}
    \mathcal{O}(-1,-2) \arrow{r} \arrow[shift left]{r}{V_2} & \mathcal{O}(-1,-1) \arrow{r} \arrow[shift left]{r}{V_1} & \mathcal{O}(0,-1) \arrow{r} \arrow[shift left]{r}{V_0} & \mathcal{O}(0,0).
  \end{tikzcd}
\end{equation}

\begin{example}[Linear quadric]
  \label{example:linear-quadric}
  We can now explain how the (commutative) quadric surface gives rise to a cubic Artin--Schelter regular~$\ZZ$\dash algebra. On~$\quadric$ there are the line bundles
  \begin{equation}
    \Oscr_{\quadric}(m,n)=\Oscr_{\PP^1}(m) \boxtimes \Oscr_{\PP^1}(n).
  \end{equation}
  The following defines an ample sequence:
  \begin{equation}
    \Oscr_{\quadric}(n)=
    \begin{cases}
      \Oscr_{\quadric}(k,k) & \text{ if } n=2k,\\
      \Oscr_{\quadric}(k+1,k) & \text{ if } n=2k+1.
    \end{cases}
  \end{equation}
  Put~$A=\bigoplus_{i,j} \Hom_{\quadric}(\Oscr_{\quadric}(-j),\Oscr_{\quadric}(-i))$. Then~$\coh\quadric \cong \qgr A$, and~$A$ is a~$3$\dash dimensional cubic~AS\dash regular algebra. One may choose bases~$x_i, y_i$ for~$V_i$ such that the relations in~$A$ are given by
  \begin{equation}
    \label{equation:linear-quadric-relations}
    \begin{aligned}
      x_i x_{i+1} y_{i+2} - y_i x_{i+1} x_{i+2} &= 0\\
      x_i y_{i+1} y_{i+2} - y_i y_{i+1} x_{i+2} &= 0.
    \end{aligned}
  \end{equation}
  The tensor~$w \in W_0$ corresponding to these relations is given by
  \begin{equation}
    \label{equation:linear-quadric}
    w=x_0x_1y_2y_3 - y_0x_1x_2y_3-x_0y_1y_2x_3+y_0y_1x_2x_3.
  \end{equation}
  The corresponding exceptional collection has quiver
  \begin{equation}
    \label{equation:linear-collection-qgr}
    \begin{tikzcd}
      \Oscr_{\quadric}(-3) \arrow[swap]{r}{y_2} \arrow[shift left]{r}{x_2} & \Oscr_{\quadric}(-2) \arrow[swap]{r}{y_1} \arrow[shift left]{r}{x_1} & \Oscr_{\quadric}(-1) \arrow[swap]{r}{y_0} \arrow[shift left]{r}{x_0} & \Oscr_{\quadric}
    \end{tikzcd}
  \end{equation}
  with relations~\eqref{equation:linear-quadric-relations}, corresponding to~\eqref{equation:sequence-collection-qgr}.

  The relationship between the homogeneous coordinate ring of~$\quadric$ under the Segre embedding and the~$\ZZ$\dash algebra~$A$ is obtained by taking the~$2$\dash Veronese of~$A$, giving an isomorphism
  \begin{equation}
    \Bigg( k\langle x,y \rangle / \begin{pmatrix} x^2y - yx^2 \\ xy^2-y^2x \end{pmatrix} \Bigg)_2 \cong k[a,b,c,d]/(ad-bc),
  \end{equation}
  where we descibed the~$\ZZ$\dash algebra as a graded algebra, because in this case~$A$ is~1\dash periodic.
\end{example}

Another important class of noncommutative quadrics is given by the so called type-A cubic algebras.
\begin{example}[Type-A cubic algebras]
  \label{example:type-a}
  We will consider the generic class of cubic algebras from \cite{MR917738}. In this case the (graded) algebra~$A$ has two generators~$x$ and~$y$ and relations
  \begin{equation}
    \label{equation:type-a-relations}
    \begin{aligned}
      ay^2x+byxy+axy^2+cx^3&=0 \\
      ax^2y+bxyx+ayx^2+cy^3&=0
    \end{aligned}.
  \end{equation}
  These algebras are Artin--Schelter regular for~$(a:b:c) \in \PP^2-S$, where
  \begin{equation}
    S=\{(a:b:c) \in \PP^2\mid a^2=b^2=c^2\} \cup \{(0:0:1),(0:1:0)\},
  \end{equation}

  The tensor~$w \in W_0$ corresponding to these relations in the~$\ZZ$\dash algebra setting is given by
  \begin{equation}
    \begin{aligned}
      w&=ay_0y_1x_2x_3+by_0x_1y_2x_3+ax_0y_1y_2x_3+cx_0x_1x_2x_3 \\
      &\quad+ax_0x_1y_2y_3+bx_0y_1x_2y_3+ay_0x_1x_2y_3+cy_0y_1y_2y_3.
    \end{aligned}
  \end{equation}

  The corresponding full and strong exceptional collection is given by
  \begin{equation}
    \begin{tikzcd}
      A(-3) \arrow[swap]{r}{y_2} \arrow[shift left]{r}{x_2} & A(-2) \arrow[swap]{r}{y_1} \arrow[shift left]{r}{x_1} & A(-1) \arrow[swap]{r}{y_0} \arrow[shift left]{r}{x_0} & A
    \end{tikzcd}
  \end{equation}
  with relations coming from~\eqref{equation:type-a-relations}.
\end{example}

Since our model for \cref{theorem:embedding} was the 3-block exceptional collection~\eqref{equation:block-collection} and not the linear collection~\eqref{equation:linear-collection-qgr}, we first have to mutate a linear exceptional collection as in \eqref{equation:sequence-collection-qgr} to a square one as in \eqref{equation:block-collection}.
\begin{proposition}
  \label{proposition:squaremutate}
  The exceptional collection obtained from~\eqref{equation:sequence-collection-qgr} by right mutating the first two objects is strong and has endomorphism ring
  \begin{equation}
    \label{equation:block-collection-qgr}
    \begin{tikzcd}
      & \Oscr(-1,0) \arrow[swap]{dr} \arrow[shift left = .6em]{dr}{R} \\
      \Oscr(-1,-1) \arrow[swap]{ur} \arrow[shift left = .6em]{ur}{V_2^\vee} \arrow[swap]{dr}{V_1} \arrow[shift left = .6em]{dr} & & \Oscr(0,0) \\
      & \Oscr(0,-1) \arrow[swap]{ur}{V_0} \arrow[shift left = .6em]{ur}
    \end{tikzcd}
  \end{equation}
  where we used the notation~$\Oscr(-1,0)=\mathrm{R}_{\Oscr(-1,-1)}\Oscr(-1,-2)$.
\end{proposition}

\begin{proof}
  By construction the right mutation~$\Oscr(-1,0)$ fits in a short exact sequence
  \begin{equation}
    \label{equation:right-mutation}
    0\to\mathcal{O}(-1,-2)\to V_2^\vee\otimes_k\mathcal{O}(-1,-1)\to\mathcal{O}(-1,0)\to 0
  \end{equation}
  because we can compute the mutation entirely in~$\qgr A$ as the morphism on the left is indeed a monomorphism by definition.

  To see that~$\Hom(\Oscr(-1,0),\Oscr(0,0))=R$ one can use the proof of \cite[Lemma~4.3]{MR2836401}. By applying~$\Hom(-,\mathcal{O}(0,0))$ to \eqref{equation:right-mutation} we get a long exact sequence, which by the canonical isomorphism~$A_{0,2}=V_0\otimes V_1=\Hom(\Oscr(-1,-1),\Oscr(0,0))$ corresponds to
  \begin{equation}
    0\to\Hom(\mathcal{O}(-1,0),\mathcal{O}(0,0))\to V_0\otimes_k V_1\otimes_k V_2\to A_{0,3}\to 0,
  \end{equation}
  hence~$R=\Hom(\mathcal{O}(-1,0),\mathcal{O}(0,0)$. This also shows that the higher Ext's vanish.

  Finally, to see that~$\Oscr(-1,0)$ and~$\Oscr(0,-1)$ are completely orthogonal, we can apply~$\Hom(-,\mathcal{O}(0,-1))$ to \eqref{equation:right-mutation}. By the resulting long exact sequence where the isomorphism~$V_1\otimes_k V_2\cong A_{1,3}$ is the only non-zero map we get the desired orthogonality.
\end{proof}

We are almost in a situation where we can apply \cref{theorem:embedding}. However, an arbitrary geometric quintuple does not give rise to a geometric square since the induced map~$R \otimes_k V_2^{\vee} \to V_0 \otimes_k V_1$ in~\eqref{equation:block-collection-qgr} is not necessarily an isomorphism. The next proposition describes a dense subset for which this is the case. Recall that~$w\in V_0\otimes_k V_1\otimes_k V_2\otimes_k V_3$, and we have an action of~$\mathbb{G}_{\mathrm{m}}$ on this space, so~$w$ can be interpreted as a point in~$\mathbb{P}^{15}$.
\begin{proposition}
  \label{proposition:linear-to-block}
  A generic geometric quintuple~$(V_0,V_1,V_2,V_3,w)$ gives rise to a geometric square. More precisely, for~$w$ in a Zariski open subset~$\Uscr'$ of~$\PP^{15}$,
  \begin{equation}
    \label{equation:associated-square}
    \square_w=(V_0 \otimes_k V_1, V_0, V_1, V_2^\vee, V_3^\vee, \id, \phi_w)
  \end{equation}
  is a geometric square, where~$\phi_w=\langle -,w\rangle^{-1}$.
\end{proposition}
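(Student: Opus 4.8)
The plan is to show that for generic $w$ the two conditions defining a geometric square are satisfied: that $\phi_w = \langle -, w\rangle^{-1}$ is a well-defined isomorphism $V_0 \otimes V_1 \to V_2^\vee \otimes V_3^\vee$, and that the trivial "isomorphism" $\id\colon V_0\otimes V_1 \to V_0 \otimes V_1$ is (tautologically) an isomorphism. The second is free; the content is entirely in the first. The map $\langle -, w\rangle$ sends $\xi \otimes \eta \in V_2 \otimes V_3$ to the partial contraction $\langle \xi \otimes \eta, w\rangle \in V_0 \otimes V_1$, extended linearly. So I first observe that this gives a linear map $V_2 \otimes V_3 \to V_0 \otimes V_1$, whose transpose (up to the canonical identification $V_2\otimes V_3 \cong (V_2^\vee \otimes V_3^\vee)^\vee$) is exactly the candidate $\phi_w$. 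Thus $\phi_w$ is an isomorphism if and only if the contraction map $c_w\colon V_2\otimes V_3 \to V_0 \otimes V_1$ is an isomorphism of four-dimensional vector spaces, equivalently if its determinant (with respect to fixed bases) is nonzero.

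The key step is therefore: the locus in the space of all tensors $w \in V_0\otimes V_1\otimes V_2 \otimes V_3$ where $c_w$ is an isomorphism is Zariski open and nonempty, and this openness descends to $(\PP^1)^4$. Openness is immediate since $\det c_w$ is a polynomial (in fact multilinear) function of $w$. Nonemptiness I would establish by exhibiting a single geometric quintuple whose tensor $w$ has $\det c_w \neq 0$ — the natural candidate is the linear quadric of \cref{example:linear-quadric}, with
\[
  w = x_0x_1y_2y_3 - y_0x_1x_2y_3 - x_0y_1y_2x_3 + y_0y_1x_2x_3,
\]
for which one computes directly that $c_w$ sends the basis $x_2\otimes x_3, x_2 \otimes y_3, y_2\otimes x_3, y_2 \otimes y_3$ of $V_2 \otimes V_3$ to $-y_0\otimes y_1,\ -y_0 \otimes x_1,\ -x_0\otimes y_1,\ x_0 \otimes x_1$ respectively, which is manifestly a basis of $V_0 \otimes V_1$, so $\det c_w = \pm 1 \neq 0$. (Alternatively one can read this off from the $2$-Veronese identification with $k[a,b,c,d]/(ad-bc)$ at the end of \cref{example:linear-quadric}.) Hence the locus $\Uscr'$ of tensors with $c_w$ invertible is a nonempty Zariski open subset of $\PP(V_0\otimes V_1\otimes V_2\otimes V_3)$; intersecting with the open locus of geometric quintuples (itself cut out by the non-degeneracy conditions in the definition of geometric quintuple, which are again nonvanishing of certain contractions) and with the image of $(\PP^1)^4 \hookrightarrow \PP(V_0\otimes\cdots\otimes V_3)$ under the Segre-type embedding $\mathbb{G}_{\mathrm m}^4$-orbit parametrization, we obtain the desired Zariski open $\Uscr'\subseteq (\PP^1)^4$.

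Finally I would check compatibility with \cref{lemma:factorization}: the two isomorphisms in $\square_w$ are $\id\colon V_0\otimes V_1 \to V_0\otimes V_1$ and $\phi_w\colon V_0\otimes V_1 \to V_2^\vee \otimes V_3^\vee$, and these are precisely the data needed to invoke the constructions of \cref{subsection:geometric-squares}; in particular the two embeddings $L_0 = \PP(V_0) \hookrightarrow \GG$ and $L_1 = \PP(V_2^\vee)\hookrightarrow \GG$ arising from the two factorizations agree with the rulings of $Q$ in the linear case and deform along $\Uscr'$. I expect the main obstacle to be purely bookkeeping: keeping straight the various dualizations (the appearance of $V_2^\vee, V_3^\vee$ rather than $V_2, V_3$ in \eqref{equation:associated-square}) and verifying that "generic geometric square" in the sense of \cref{proposition:relations} and \cref{theorem:embedding} — which requires in addition that the two lines $\PP(U_0^0)$ and $\PP(U_0^1)$ do not meet in $\GG$ — is implied by a further shrinking of $\Uscr'$; this last non-intersection condition is again a closed condition that fails on the linear quadric's complement and hence cuts out a dense open subset.
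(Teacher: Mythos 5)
Your proposal is correct and follows essentially the same route as the paper: both reduce the statement to invertibility of the contraction~$\langle-,w\rangle\colon V_2^\vee\otimes V_3^\vee\to V_0\otimes V_1$, i.e.\ to the nonvanishing of a determinant depending polynomially on~$w$, and define~$\Uscr'$ as the complement of its vanishing locus --- you additionally verify nonemptiness by the explicit computation for the linear quadric (which the paper defers to \cref{example:linear-final}), while the paper instead records that geometricity forces the kernel of~$\langle-,w\rangle$ to avoid the cone of pure tensors and hence to be at most one-dimensional. Two immaterial slips: the image of~$x_2\otimes x_3$ under your~$c_w$ is~$+y_0\otimes y_1$ rather than~$-y_0\otimes y_1$ (the determinant is still~$\pm1$), and the parenthetical about intersecting with the Segre image of~$(\PP^1)^4$ inside~$\PP(V_0\otimes V_1\otimes V_2\otimes V_3)$ should be dropped, since a geometric tensor is never a pure tensor --- though the paper's own identification of~$w$ with a point of~$(\PP^1)^4$ is equally loose and plays no role in the openness argument.
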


\begin{proof}
  The condition that the morphism
  \begin{equation}
    \langle-,w\rangle\colon V_2^\vee\otimes_k V_{3}^\vee\to V_{0}\otimes_k V_{1}
  \end{equation}
  induced by an element~$w\in V_0\otimes_k V_1\otimes_k V_2\otimes_k V_3$ is an isomorphism is given by the non-vanishing of the determinant. The open subset~$\Uscr'$ is defined as the intersection of the locus of geometric quintuples with the complement of this vanishing locus in~$\PP^{15}$. So starting from a geometric quintuple with~$w \in \Uscr'$ we can define the associated square~\eqref{equation:associated-square}.
\end{proof}

\begin{remark}
  Remark that the condition required for \cref{proposition:linear-to-block} is indeed stronger than the geometricity condition for a quintuple. This geometricity condition ensures that the morphism~$\langle -,w\rangle$ sends the pure tensors to nonzero elements. This does not imply that the morphism is an isomorphism, only that the kernel has to intersect the quadric cone corresponding to the pure tensors trivially in the origin. This implies that the kernel is necessarily of dimension~1.
\end{remark}

Let us denote by~$\HH_w\coloneqq\HH_{\square_w}$, for~$w \in \Uscr'$, and by~$\qgr A_w$ the associated noncommutative quadric. The following is then our main result.

\begin{theorem}
  \label{theorem:noncommutative-embedding}
  The varieties~$\HH_{w}$ form a smooth projective family~$\Hscr$ over a Zariski open~$\Uscr \subset \Uscr'$ containing~$\HH$, and for each~$w\in\Uscr$ there is an admissible embedding
  \begin{equation}
    \derived^\bounded(\qgr A_w) \hookrightarrow \derived^\bounded(\HH_w).
  \end{equation}
  by vector bundles of ranks~$2, 2, 2$ and~$1$.
\end{theorem}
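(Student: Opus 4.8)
The plan is to assemble \cref{theorem:noncommutative-embedding} from the pieces already in place, the key point being to upgrade the ``generic geometric square'' statements into an honest open subset containing the commutative point. First I would observe that, by the classification theorem, a noncommutative quadric $\qgr A_w$ is recorded by a geometric quintuple $(V_0,V_1,V_2,V_3,w)$ with $w\in(\PP^1)^4$ (after fixing the $\mathbb{G}_{\mathrm m}^4$\dash action), and by \cref{proposition:squaremutate} its derived category carries the block exceptional collection \eqref{equation:block-collection-qgr}. By \cref{proposition:linear-to-block} the condition ``$w\in\Uscr'$'' — meaning $\langle-,w\rangle\colon V_2^\vee\otimes V_3^\vee\to V_0\otimes V_1$ is an isomorphism, i.e.\ a single determinant is nonzero — is Zariski open in $(\PP^1)^4$, and by \cref{example:linear-final} the commutative point $w_0$ of \eqref{equation:linear-quadric} lies in $\Uscr'$. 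On $\Uscr'$ we get the associated geometric square $\square_w$ from \eqref{equation:associated-square}, and $Q_{\square_w}\cong Q_w$ as algebras by construction (the quiver \eqref{equation:block-collection-qgr} matches \eqref{equation:block-collection}).

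Next I would cut down $\Uscr'$ to the open locus where the remaining genericity hypotheses used in \cref{proposition:relations}, \cref{theorem:exceptional-collection} and \cref{theorem:embedding} hold. Tracing through those proofs, the only extra conditions are: (i) the two projective lines $\PP(U_0^0),\PP(U_0^1)\subset\GG$ coming from $\phi_0=\id$ and $\phi_1=\phi_w$ via \cref{lemma:factorization} are disjoint (this is what forces $\Hom(K_0,K_1)=\Hom(K_1,K_0)=0$ and gives the block shape); and (ii) $\dim\Hom_\GG(\Rscr,K_i)=2$, which by the computation in \cref{theorem:exceptional-collection} is equivalent to the middle map in the exact sequence there having the expected rank. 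Both are closed conditions on the complement, hence cut out a Zariski open $\Uscr\subset\Uscr'$; by \cref{example:linear-final} the commutative $w_0$ satisfies (i) (the two rulings of $\quadric$ are disjoint as points of $\GG$) and satisfies (ii) since there the square is literally \eqref{equation:block-collection}, so $\HH\in\Hscr$ as claimed. For every $w\in\Uscr$, \cref{theorem:embedding} then supplies the admissible embedding $\derived^\bounded(Q_{\square_w})\hookrightarrow\derived^\bounded(\HH_{\square_w})$, and since $Q_{\square_w}\cong Q_w$ and $\derived^\bounded(Q_w)\simeq\derived^\bounded(\qgr A_w)$ via the exceptional collection \eqref{equation:block-collection-qgr}, this is the desired embedding; the generating objects $p^*\Rscr,C_1,C_2,\Oscr_{\HH_\square}$ have ranks $2,2,2,1$ by \cref{theorem:exceptional-collection}.

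It remains to prove the family statement: that $\{\HH_w\}_{w\in\Uscr}$ assembles into a smooth projective family $\Hscr\to\Uscr$ flatly deforming $\HH=\Hilb^2\quadric$. Here I would work relatively over $\Uscr$: the two lines $L_i$ depend algebraically on $w$ (their Plücker coordinates are explicit rational functions of $w$, regular on $\Uscr$), so they fit into two disjoint smooth subvarieties $\mathcal{L}_0,\mathcal{L}_1\subset\GG\times\Uscr$, each a $\PP^1$\dash bundle over $\Uscr$ of relative codimension $3$ in $\GG\times\Uscr$. Blowing up $\GG\times\Uscr$ along $\mathcal{L}_0\sqcup\mathcal{L}_1$ (the blow-up commutes with the smooth base change to a fibre since the centre is smooth over $\Uscr$ and disjoint from itself) produces $\Hscr\to\Uscr$ whose fibre over $w$ is exactly $\Bl_{L_0\sqcup L_1}\GG=\HH_{\square_w}$; this morphism is projective because a blow-up along a closed subscheme is projective, and smooth because blowing up a smooth subvariety inside a smooth ambient space keeps it smooth and the relative dimension is constant $=4$, hence $\Hscr\to\Uscr$ is flat. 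Over $w_0$ the fibre is $\HH$ by \cref{theorem:description-H}, so $\Hscr$ is a deformation of $\HH$ containing it.

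The main obstacle I expect is not any single deep fact but the bookkeeping of \emph{openness with nonempty commutative fibre}: one must check that each genericity hypothesis invoked along the chain \cref{proposition:relations} $\to$ \cref{theorem:exceptional-collection} $\to$ \cref{theorem:embedding} is an open condition on $(\PP^1)^4$ \emph{and} is satisfied at $w_0$, so that the resulting $\Uscr$ is a genuine (nonempty, in fact dense) open containing the classical Hilbert scheme. The dimension-$2$ condition (ii) is the subtlest: it asserts $\mathbf{R}p_*C_i\cong K_i$ has the expected cohomology, which by semicontinuity is open, and holds at $w_0$ because there all the sheaves are the honest ones on $\Bl_L\GG=\HH$; but verifying it does not degenerate requires checking the rank of the explicit linear map in \cref{theorem:exceptional-collection} is generically maximal, which one can do by exhibiting the single point $w_0$ where it is. Everything else — faithfulness of the $Q_\square$\dash action on the mutated bundles, flatness of the blow-up family, constancy of ranks — is routine given the earlier results.
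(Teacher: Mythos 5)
Your proposal is correct and follows essentially the same route as the paper, which simply combines \cref{theorem:embedding}, \cref{proposition:linear-to-block} and \cref{proposition:squaremutate}, restricts to the open locus where the two lines are disjoint, and notes that $\HH$ belongs to the family by \cref{example:linear-final}. You supply more detail than the paper does — in particular the relative blow-up construction of $\Hscr\to\Uscr$ and the explicit openness/semicontinuity bookkeeping — but these are elaborations of the same argument rather than a different approach.
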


\begin{proof}
  This is now immediate from the combination of~\cref{theorem:embedding},~\cref{proposition:linear-to-block} and~\cref{proposition:squaremutate}. Note that we have to restrict to a Zariski open~$\Uscr \subset \Uscr'$ since~\cref{theorem:embedding} only works for a generic geometric square for which the corresponding~$\PP^1$'s do not intersect. Also,~$\HH$ is a member of the family by~\cref{example:linear-final}.
\end{proof}

\begin{example}[Linear quadric]
  \label{example:linear-final}
  For the geometric quintuple \eqref{equation:linear-quadric} it is easy to see that~$w \in \Uscr$, so we get an associated geometric square with exceptional collection~\eqref{equation:block-collection-qgr}, which is exactly the 3-block collection~\eqref{equation:block-collection}. Another small calculation shows that the two~$\PP^1$'s don't intersect so~$w \in \Uscr'$. As expected, the two~$\PP^1$'s correspond to the two rulings on~$\PP^1 \times \PP^1$, which we used in~\cref{theorem:description-H}.

%A point $p=(a:b)$ in $\PP(V_0)$ gives the matrix
%\begin{equation}
%  \begin{pmatrix}
%    a & 0 & b & 0 \\
%	   0 & a & 0 & b
%  \end{pmatrix}
%\end{equation}
%as map $V=V_0 \otimes V_1 \to V_1$ and a point $(c:d)$ in $\PP(V_2)$ gives
%\begin{equation}
%  \begin{pmatrix}
%    0 & -d & 0 & c \\
%    d & 0 & -c & 0
%  \end{pmatrix}
%\end{equation}
%as map $V_0 \otimes V_1 \xrightarrow{\langle -,w \rangle^{-1}} V_2^{\vee} \otimes V_3^{\vee} \cong V_2 \otimes V_3 \to V_3$. Note that there is an annoying isomorphism which shouldn't be there, so at some point in the paper there might be a wrong dualization. So the images of the respective $\PP^1$'s in $\GG$ in terms of $\PP^3$ coordinates are
%\begin{equation}
%\begin{aligned}
%l^1_{[a:b]}&=\{[\lambda a: \mu a: \lambda b: \mu b] | [\lambda:\mu] \in \PP^1\} \\
%l^2_{[a:b]}&=\{[\mu b: -\lambda b: -\mu a: \lambda a] | [\lambda:\mu] \in \PP^1\},
%\end{aligned}
%\end{equation}
%which are parametrisations of the two rulings on $\PP^1 \times \PP^1$.
\end{example}

\begin{example}
  Consider the type-A cubic algebra from \cref{example:type-a} for the parameters~$(0:1:1)$. In this case the matrix describing~$\phi_w$ is the identity matrix, hence the two~$\mathbb{P}^1$'s coincide and~\cref{theorem:embedding} does not apply.
\end{example}

\section{Further remarks}
\label{section:remarks}

Based on the result for~$\PP^2$ from \cite{zbMATH06527075} Orlov conjectured informally that every noncommutative deformation can be embedded in some commutative deformation, i.e.\ for every smooth projective variety~$X$ there exists a smooth projective variety~$Y$ and a fully faithful functor~$\derived^\bounded(X)\hookrightarrow\derived^\bounded(Y)$ such that for every noncommutative deformation of~$X$ there is a commutative deformation of~$Y$ such that there is again a fully faithful functor between the bounded derived categories.

The result in this paper adds some further evidence to this, by proving the result for~$X=\quadric$ and~$Y=\Hilb^2\quadric$. The general construction from \cite{zbMATH06527075} seems to prove this conjecture in case~$\derived^\bounded(X)$ has a full and strong exceptional collection: noncommutative deformations of~$X$ correspond to changing the relations in the quiver, and these changes are reflected by changing the vector bundles in the iterated projective bundle construction.

However, it would be interesting to know whether one can always choose for~$Y$ a natural moduli space associated to~$X$, as is the case for~$\PP^2$ and~$\PP^1 \times \PP^1$ where one can take the Hilbert scheme of two points. To investigate this in  a more general setting we formulate an infinitesimal version of this conjecture in terms of limited functoriality for Hochschild cohomology, and explain how results on Poisson structures on surfaces give some substance to this conjecture in special cases.

The infinitesimal deformation theory of abelian categories is governed by their Hochschild cohomology \cite{MR2183254}, and one has the Hochschild--Kostant--Rosenberg decomposition for Hochschild cohomology of smooth varieties. In particular there is the decomposition
\begin{equation}
  \HHHH^2(X)=\HHH^0(X,\bigwedge^2\Tscr_X)\oplus\HHH^1(X,\Tscr_X)\oplus\HHH^2(X,\mathcal{O}_X)
\end{equation}
where the first term can be understood as the noncommutative deformations, the second as the commutative (or geometric) deformations and the third one corresponding to gerby deformations \cite{MR2477894}.

The natural categorical framework for Hochschild cohomology is that of dg~categories. It is easily checked that Hochschild cohomology is not functorial for arbitrary functors: it only satisfies a limited functoriality. Indeed, in the case of a dg~functor inducing a fully faithful embedding on the level of derived categories there is an induced morphism on the Hochschild cohomologies \cite{keller-derived-invariance}, which in the case of Fourier--Mukai transforms is treated in \cite{MR3420334}.

Combining limited functoriality with the Hochschild--Kostant--Rosenberg decomposition one could formulate an infinitesimal version of Orlov's conjecture as follows.
\begin{question}
  Let~$X$ be a smooth projective variety. Does there exist a smooth projective variety~$Y$ and a fully faithful embedding~$\derived^\bounded(X)\to\derived^\bounded(Y)$, such that the induced morphism on Hochschild cohomologies induces a surjective morphism
  \begin{equation}
    \label{equation:conjecture-morphism}
    \HHH^1(Y,\Tscr_Y)\twoheadrightarrow\HHH^0(X,\bigwedge^2\Tscr_X).
  \end{equation}
\end{question}
Sadly, we do not even know the answer for the embeddings obtained for~$X=\PP^2$ or~$\quadric$ and~$Y=\Hilb^2X$.

Some positive evidence comes from a result by Hitchin who shows in \cite{MR3024823} the existence of the split exact sequence
\begin{equation}
  \label{equation:hitchin-sequence}
  0\to\HHH^1(S,\Tscr_S)\to\HHH^1(\Hilb^nS,\Tscr_{\Hilb^nS})\to\HHH^0(S,\omega_S^\vee)\to 0
\end{equation}
where~$S$ is a smooth projective surface over the complex numbers.

Again one does not know that the morphism on the right is related to \eqref{equation:conjecture-morphism}, but it does show that a possible approach might be to choose for~$Y$ a smooth projective variety representing a moduli problem associated to~$X$.

\begin{remark}
  The choice of the Hilbert scheme of~$n$ points seems to be a natural choice in the case of a surface with exceptional structure sheaf, but for higher-dimensional varieties the Hilbert scheme fails to be smooth in general. For~$n=2$ they are smooth though, but in \cite{bfr} we generalise \eqref{equation:krug-sosna-embedding} to the case of Hilbert squares of higher-dimensional varieties with exceptional structure sheaf, and use it to show that~$\HHH^1(\Hilb^2X,\Tscr_{\Hilb^2X})\cong\HHH^1(X,\Tscr_X)$, i.e.~there is no contribution of the noncommutative deformations of~$X$.
\end{remark}

Closely related to the suggested question is a correspondence between the Hochschild cohomology of a noncommutative plane and the deformation of the Hilbert scheme of~2~points on~$\mathbb{P}^2$ whose derived category contains the derived category of the noncommutative plane. At least for a Sklyanin algebra the Hochschild cohomology as computed in \cite{1705.06098} agrees with the commutative deformations in the Hochschild--Kostant--Rosenberg decomposition of the deformed Hilbert scheme, by \cite[proposition~11]{MR3024823}. Understanding this phenomenon in greater detail is work in progress.

\appendix
\section{A proof of Lemma \ref{lemma:factorization}}
In this appendix we give a detailed proof of Lemma \ref{lemma:factorization}. Let us recall the setup and notation. Given a vector space~$V$ of dimension~4 we let~$\GG$ denote the Grassmannian of~$2$\dash dimensional quotients of~$V$. Let
\begin{equation}
  \label{equation:tautological-sequence-appendix}
  0 \to \Rscr \xrightarrow{r} V\otimes_k \Oscr_\GG \xrightarrow{q} \Qscr \to 0
\end{equation}
be the tautological exact sequence on~$\GG$, where~$\Qscr$ is the universal quotient bundle of rank~$2$, and~$\Rscr$ is the universal subbundle of rank~$2$. We have that~$V=\Hom_\GG(\Oscr_\GG,\Qscr)$, and~$q$ is the evaluation morphism.

Moreover,~$V_0, V_1$ denote vector spaces of dimension~$2$, and~$\chi\colon V \to V_0 \otimes_k V_1$ is a given isomorphism. On~$\PP\coloneqq\PP(V_0)$ we have the canonical quotient morphism
\begin{equation}
  V_0\otimes_k\Oscr_\PP\overset{\pi}{\to}\Oscr_\PP(1)
\end{equation}
which gives us the morphism
\begin{equation}
  \label{equation:u}
  u\colon V\otimes_k\Oscr_\PP\xrightarrow{\chi\otimes\identity_{\Oscr_\PP}}V_1\otimes_kV_0\otimes_k\Oscr_\PP\xrightarrow{\identity_{V_1}\otimes\pi}V_1\otimes_k\Oscr_\PP(1).
\end{equation}
From this we obtain the classifying morphism~$\Phi=\Phi_\chi\colon\PP\to\GG$, such that~$\Phi^*q\simeq u$. The object~$\mathcal{K}_\chi$ is defined via the short exact sequence
\begin{equation}
  \label{equation:definition-K-chi}
  0\to\mathcal{K}_\chi\to\HHH^0(\GG,\Phi_*\Oscr_\PP(1))\otimes_k\Oscr_\GG\overset{\eval}{\to}\Phi_*\Oscr_\PP(1)\to 0.
\end{equation}

We will use the following shorthand
\begin{equation}
  A\coloneqq\Oscr_\GG,
  \qquad B\coloneqq\mathcal{Q},
  \qquad C\coloneqq\Phi_*\Oscr_\PP(1).
\end{equation}

\begin{lemma}
\label{lem:mutation}
Seen as distinguished triangles, the sequences~\eqref{equation:tautological-sequence-appendix} and~\eqref{equation:definition-K-chi} respectively coincide with the mutation triangles
\begin{equation}
  \label{equation:left-1}
  B[-1]\xrightarrow[+1]{b}\LL_AB\xrightarrow{\beta}\RHom(A,B)\otimes_kA\xrightarrow{\eval} B
\end{equation}
and
\begin{equation}
  \label{equation:left-2}
  C[-1]\xrightarrow[+1]{c}\LL_AC\xrightarrow{\gamma}\RHom(A,C)\otimes_kA\xrightarrow{\eval} C.
\end{equation}
\end{lemma}
\begin{proof}
For~\eqref{equation:tautological-sequence-appendix}, this is standard, and follows for example from the construction of the dual exceptional collection from Theorem \ref{theorem:decomposition-G}. For~\eqref{equation:definition-K-chi}, this follows since $\Ext^i_{\GG}(\Oscr_\GG,\Phi_*\Oscr_\PP(1))=\HHH^i(\PP,\Oscr_\PP(1))=0$, for $i>0$.
\end{proof}

Note however that the object~$C$ is not exceptional. Now Lemma \ref{lemma:factorization} can be restated as follows.
\begin{lemma}
\label{lem:reformulated}
  The composition morphism
  \begin{equation}
    \label{equation:composition-morphism}
    \Hom(\LL_AB,\LL_AC)\otimes_k\Hom(\LL_AC,A)\to\Hom(\LL_AB,A)
  \end{equation}
  is an isomorphism.
\end{lemma}
The proof of Lemma \ref{lem:reformulated} now follows by combining Lemma \ref{lem:numbers} and Lemma \ref{lem:chi} below. The idea is to first identify \eqref{equation:composition-morphism} with
\begin{equation}
  \label{equation:ABC-evaluation}
  \Hom(B,C)\otimes_k\Hom(A,C)^\vee\to\Hom(A,B)^\vee:f\otimes g^\vee\mapsto(h\mapsto\langle g^\vee,f\circ h\rangle)
\end{equation}
and then to identify the~$k$\dash linear dual
\begin{equation}
  \label{equation:k-dual-ABC}
  \Hom(A,B)\to\Hom(A,C)\otimes_k\Hom(B,C)^\vee
\end{equation}
of \eqref{equation:ABC-evaluation} with the isomorphism~$\chi$.

For the first step we will use the following sequences of canonical isomorphisms:
\begin{equation}
  \label{equation:iso-1}
  \Hom(B,C)
  \xrightarrow[\simeq]{[-1]}
  \Hom(B[-1],C[-1])
  \xrightarrow[\simeq]{c_*}
  \Hom(B[-1],\LL_AC)
  \xrightarrow[\simeq]{(b_*)^{-1}}
  \Hom(\LL_AB,\LL_AC),
\end{equation}
\begin{equation}
  \label{equation:iso-2}
  \Hom(A,B)^\vee
  \xrightarrow[\simeq]{}
  \Hom\left( \Hom(A,B)\otimes_kA,A \right)
  \xrightarrow[\simeq]{\beta^*}
  \Hom(\LL_AB,A)
\end{equation}
and
\begin{equation}
  \label{equation:iso-3}
  \Hom(A,C)^\vee
  \xrightarrow[\simeq]{}
  \Hom\left( \Hom(A,C)\otimes_kA,A \right)
  \xrightarrow[\simeq]{\gamma^*}
  \Hom(\LL_AC,A).
\end{equation}
The chain of isomorphisms in \eqref{equation:iso-1} uses that~$\langle A,B \rangle$ is an exceptional pair, and that~$\Hom(A,\LL_AC[i])=0$ for all~$i$.
%, which follows from a Serre duality computation.% see equation:cohomology-K-chi
The chain of isomorphisms in \eqref{equation:iso-2} follows from~$\langle A,B\rangle$ being an exceptional pair.
The final chain of isomorphisms in \eqref{equation:iso-3} follows from~$A$ being exceptional and~$\Hom(C,A[i])=0$ for~$i=0,1$.
% a Serre duality computation.% see equation:ext-computation

\begin{lemma}
\label{lem:numbers}
  The composition morphism \eqref{equation:composition-morphism} coincides with \eqref{equation:ABC-evaluation}, using the isomorphisms \eqref{equation:iso-1}, \eqref{equation:iso-2} and \eqref{equation:iso-3}.
\end{lemma}

\begin{proof}
  Consider~$f\in\Hom(B,C)$ and~$g^\vee\in\Hom(A,C)^\vee$. The triangles \eqref{equation:left-1} and \eqref{equation:left-2} give rise to the commutative diagram
  \begin{equation}
    \begin{tikzcd}
      B[-1] \arrow[r, "b"] \arrow[d, "{f[-1]}"] & \LL_AB \arrow[r, "\beta"] \arrow[d, dashed, "\widetilde{f}"] & \Hom(A,B)\otimes_kA \arrow[d, "f_*\otimes\identity_A"] \\
      C[-1] \arrow[r, "c"] & \LL_AC \arrow[r, "\gamma"] & \Hom(A,C)\otimes_kA  \\
    \end{tikzcd}
  \end{equation}
The assertion now follows by considering $\widetilde{g}\coloneqq(g^\vee \otimes \id_A)\circ \gamma\colon \LL_AC \to A$, and using that commutativity ensures that~$\widetilde{g} \circ \widetilde{f}=(g^\vee \otimes \id_A) \circ (f_* \otimes \id_A) \circ \beta$ combined with \eqref{equation:iso-1}, \eqref{equation:iso-2} and \eqref{equation:iso-3}.
\end{proof}

The~$k$\dash linear dual of \eqref{equation:ABC-evaluation} coincides with the map
\begin{equation}
\label{equation:coev}
  \Hom(A,B)\to\Hom(A,C)\otimes_k\Hom(B,C)^\vee
\end{equation}
obtained by applying $\Hom(A,-)$ to the coevaluation map $B \to \Hom(B,C)^\vee \otimes_k C$. We also have the following sequences of canonical isomorphisms:
\begin{equation}
  \label{equation:iso-2-1}
  V
  \xrightarrow[\simeq]{}
  \Hom_\GG(\Oscr_\GG,V\otimes_k\Oscr_\GG)
  \xrightarrow[\simeq]{}
  \Hom_\GG(\Oscr_\GG,\mathcal{Q})
  =
  \Hom(A,B),
\end{equation}
\begin{equation}
  \label{equation:iso-2-2}
  V_1^\vee
  \xrightarrow[\simeq]{}
  \Hom_\PP(V_1\otimes_k\Oscr_\PP(1),\Oscr_\PP(1))
  \xrightarrow[\simeq]{}
  \Hom_\PP(\Phi^*\mathcal{Q},\Oscr_\PP(1))
  \xrightarrow[\simeq]{}
  \Hom_\GG(\mathcal{Q},\Phi_*\Oscr_\PP(1))
  =\Hom(B,C)
\end{equation}
and
\begin{equation}
  \label{equation:iso-2-3}
  V_0
  \xrightarrow[\simeq]{}
  \Hom_\PP(\Oscr_\PP,\Oscr_\PP(1))
  \xrightarrow[\simeq]{}
  \Hom_\PP(\Phi^*\Oscr_\GG,\Oscr_\PP(1))
  \xrightarrow[\simeq]{}
  \Hom_\GG(\Oscr_\GG,\Phi_*\Oscr_\PP(1))
  =
  \Hom(A,C).
\end{equation}
The second isomorphism in \eqref{equation:iso-2-2} was discussed in \eqref{equation:u} (and contains the information about~$\chi$). The other ones are all standard.

\begin{lemma}
\label{lem:chi}
  The morphism~$\chi$ coincides with \eqref{equation:coev} using the isomorphisms \eqref{equation:iso-2-1}, \eqref{equation:iso-2-2} and \eqref{equation:iso-2-3}.
\end{lemma}

\begin{proof}
  By adjunction the statement of the lemma is equivalent to the claim that the composition morphism
  \begin{equation}
  \label{equation:composition-again}
    \Hom(A,B)\otimes_k\Hom(B,C)\to\Hom(A,C)
  \end{equation}
  coincides with the morphism
  \begin{equation}
    V\otimes_kV_1^\vee\to V_0:v\otimes v_1^\vee\mapsto\langle v_1^\vee,\chi(v) \rangle,
  \end{equation}
  where we use \eqref{equation:iso-2-1}, \eqref{equation:iso-2-2} and \eqref{equation:iso-2-3} to identify the spaces.

  To see this, we consider the diagram
  \begin{equation}
    \label{equation:commutative-diagram}
    \begin{tikzcd}
      \Hom_\GG(\Oscr_\GG,\mathcal{Q})\otimes_k\Hom_\GG(\mathcal{Q},\Phi_*\Oscr_\PP(1)) \arrow[r, "-\circ-"] \arrow[d, "\simeq"] & \Hom_\GG(\Oscr_\GG,\Phi_*\Oscr_\PP(1)) \arrow[dd, "\simeq"] \\
      \Hom_\GG(\Oscr_\GG,\mathcal{Q})\otimes_k\Hom_\PP(\Phi^*\mathcal{Q},\Oscr_\PP(1)) \arrow[d, "\Phi^*\otimes\identity"] \\
      \Hom_\PP(\Phi^*\Oscr_\GG,\Phi^*\mathcal{Q})\otimes_k\Hom_\PP(\Phi^*\mathcal{Q},\Oscr_\PP(1)) \arrow[r, "-\circ-"] & \Hom_\PP(\Phi^*\Oscr_\GG,\Oscr_\PP(1)).
    \end{tikzcd}
  \end{equation}
  which commutes since~$\Phi^*$ and~$\Phi_*$ are adjoint functors.
%  This uses that if $F:C \to D$ is left adjoint to $G$, and $\phi:\Hom(F-,-) \to \Hom(-,G-)$ denotes the Hom-set adjunction, then for $f \in \Hom(X,GY): \phi_{X,Y}^{-1}(f)=\epsilon_Y \circ Ff$.

  Letting~$f\in\Hom(A,B)$ and~$g\in\Hom(B,C)$ correspond to~$v\in V$ and~$v_1^\vee\in V_1^\vee$ respectively we obtain by the commutativity of \eqref{equation:commutative-diagram} that
  \begin{equation}
    \operatorname{adjoint}(g\circ f)=\operatorname{adjoint}(g)\circ\Phi^*f,
  \end{equation}
  where we explicitly indicate the adjunction isomorphisms.

  It follows that the morphism~$V\otimes_kV_1^\vee\to V_0\cong\Hom_\PP(\Oscr_\PP,\Oscr_\PP(1))$ (induced by \eqref{equation:composition-again} and \eqref{equation:iso-2-1}, \eqref{equation:iso-2-2} and \eqref{equation:iso-2-3}) sends~$v\otimes v_1^\vee$ to the morphism
  \begin{equation}
    (v_1^\vee\otimes\identity_{\Oscr_\PP(1)})\circ u\circ (v\otimes\identity_{\Oscr_\PP})\colon\Oscr_\PP\to\Oscr_\PP(1).
  \end{equation}
  But this proves the lemma, as we can rewrite this as
  \begin{equation}
    (v_1^\vee\otimes\identity_{\Oscr_\PP(1)})\circ(\identity_{V_1}\otimes\pi)\circ(\chi(v)\otimes\identity_{\Oscr_\PP})=\pi\circ(\langle v_1^\vee,\chi(v)\rangle\otimes\identity_{\Oscr_\PP}).
  \end{equation}
\end{proof}

\bibliography{mybibstheo-v5}{}
%\bibliography{article,mr.bib}{mybibs} % mr.bib is where Pieter's automatically generated bibtex file goes
\bibliographystyle{amsplain}
\def\cprime{$'$} \def\cprime{$'$} \def\cprime{$'$}
\providecommand{\bysame}{\leavevmode\hbox to3em{\hrulefill}\thinspace}
\providecommand{\MR}{\relax\ifhmode\unskip\space\fi MR }
% \MRhref is called by the amsart/book/proc definition of \MR.
\providecommand{\MRhref}[2]{%
  \href{http://www.ams.org/mathscinet-getitem?mr=#1}{#2}
}
\providecommand{\href}[2]{#2}

\end{document}